\newcounter{lemma}[section]
\newcounter{corollary}[section]
\newcounter{remark}[section]
\newcounter{theorem}[section]
\newcounter{proposition}[section]
\newcounter{example}
\numberwithin{equation}{section}
\begin{document}

\markboth{E~.SEVOST'YANOV, V.~TARGONSKII,
N.~ILKEVYCH}{\centerline{ON KOEBE'S THEOREM ...}}

\def\cc{\setcounter{equation}{0}
\setcounter{figure}{0}\setcounter{table}{0}}

\overfullrule=0pt

%\normalsize\large

\author{EVGENY SEVOST'YANOV, VALERY TARGONSKII, NATALIYA ILKEVYCH}

\title{
{\bf ON KOEBE'S THEOREM FOR MAPPINGS WITH INTEGRAL CONSTRAINTS}}

\date{\today}
\maketitle

%\large
\begin{abstract}
We study mappings that satisfy the inverse modulus inequality of
Poletsky type with respect to $p$-modulus. Given $n-1<p\leqslant n,$
we show that, the image of some ball contains a fixed ball under
mappings mentioned above. This statement can be interpreted as the
well-known analogue of Koebe's theorem for analytic functions. As a
consequence, we obtain the openness and discreteness of the limit
mapping in the class under study. The paper also studies mappings of
the Orlicz-Sobolev classes, for which an analogue of the Koebe
one-quarter theorem is obtained as a consequence of the main results
\end{abstract}

\bigskip
{\bf 2010 Mathematics Subject Classification: Primary 30C65;
Secondary 31A15, 31B25}

\medskip
{\bf Key words: mappings  with a finite and bounded distortion,
moduli, capacity}

\section{Introduction}

The classical Koebe theorem for analytic functions states that
conformal mappings of the unit disk with hydrodynamic normalization
at the origin cover a disk centered at the origin of radius 1/4. Let
us directly recall the formulation of this theorem,
see~\cite[Theorem~1.3]{CG}.

\medskip
{\bf Theorem A.} {\it Let $f:{\Bbb D}\rightarrow {\Bbb C}$ be an
univalent analytic function such that $f(0)=0$ and
$f^{\,\prime}(0)=1.$ Then the image of $f$ covers the open disk
centered at $0$ of radius one-quarter, that is, $f({\Bbb D})\supset
B(0, 1/4).$}

\medskip
Many authors have obtained some other versions of the analogue of
Koebe's theorem, including results for wider classes of mappings,
see, e.g.,  \cite{AFW}, \cite{M} and \cite{Ra}. In particular, we
have recently obtained results on this topic, see, e.g., \cite{ST}.
Also in~\cite{Cr}, some results on the convergence of mappings were
obtained, which essentially used Koebe's theorem, established for
more general classes of mappings compared to~\cite{ST}. As for the
present paper, we consider here classes of mappings satisfying the
inverse Poletsky inequality, in which the majorant $Q$ may depend on
the mapping. The so-called integral constraints are imposed on these
majorants, which control them by means of a certain convex
increasing function~$\Phi,$ see, e.g., \cite{RS}. The main result of
the paper is an analogue of Koebe's theorem for such mappings. As a
consequence, we also establish the openness and discreteness of the
limit mapping using the approaches we have developed, as well as the
methodology of the paper~\cite{Cr}. One of the goals of the paper is
also to obtain corresponding results for Orlicz-Sobolev classes. We
establish these results using the connection between Orlicz-Sobolev
classes and mappings with the inverse Poletsky inequality.

\medskip
Below $dm(x)$ denotes the element of the Lebesgue measure in ${\Bbb
R}^n.$ Everywhere further the boundary $\partial A $ of the set $A$
and the closure $\overline{A}$ should be understood in the sense of
the extended Euclidean space $\overline{{\Bbb R}^n}.$ Recall that, a
Borel function $\rho:{\Bbb R}^n\,\rightarrow [0,\infty] $ is called
{\it admissible} for the family $\Gamma$ of paths $\gamma$ in ${\Bbb
R}^n,$ if the relation
$\int\limits_{\gamma}\rho (x)\, |dx|\geqslant 1$
holds for all (locally rectifiable) paths $ \gamma \in \Gamma.$ In
this case, we write: $\rho \in {\rm adm} \,\Gamma .$ Given
$p\geqslant 1,$ {\it $p$-modulus} of $\Gamma $ is defined by the
equality
$
M_p(\Gamma)=\inf\limits_{\rho \in \,{\rm adm}\,\Gamma}
\int\limits_{{\Bbb R}^n} \rho^p (x)\,dm(x)\,.
$
Let $y_0\in {\Bbb R}^n,$ $0<r_1<r_2<\infty$ and
\begin{equation*}\label{eq1**}
A=A(y_0, r_1,r_2)=\left\{ y\,\in\,{\Bbb R}^n:
r_1<|y-y_0|<r_2\right\}\,.\end{equation*}
Given $x_0\in{\Bbb R}^n,$ we put
$B(x_0, r)=\{x\in {\Bbb R}^n: |x-x_0|<r\}\,, \quad {\Bbb B}^n=B(0,
1)\,,$
$S(x_0,r) = \{ x\,\in\,{\Bbb R}^n : |x-x_0|=r\}\,.$
A mapping $f: D \rightarrow{\Bbb R}^n$ is called {\it discrete} if
the pre-image $\{f^{-1}\left(y\right)\}$ of any point $y\,\in\,{\Bbb
R}^n$ consists of isolated points, and {\it open} if the image of
any open set $U\subset D$ is an open set in ${\Bbb R}^n.$

Given sets $E,$ $F\subset\overline{{\Bbb R}^n}$ and a domain
$D\subset {\Bbb R}^n$ we denote by $\Gamma(E,F,D)$ the family of all
paths $\gamma:[a,b]\rightarrow \overline{{\Bbb R}^n}$ such that
$\gamma(a)\in E,\gamma(b)\in\,F$ and $\gamma(t)\in D$ for $t \in (a,
b).$ Given a mapping $f:D\rightarrow {\Bbb R}^n,$ a point $y_0\in
\overline{f(D)}\setminus\{\infty\},$ and $0<r_1<r_2<r_0<\infty,$ we
denote by $\Gamma_f(y_0, r_1, r_2)$ a family of all paths $\gamma$
in $D$ such that $f(\gamma)\in \Gamma(S(y_0, r_1), S(y_0, r_2),
A(y_0,r_1,r_2)).$ Let $Q:{\Bbb R}^n\rightarrow [0, \infty]$ be a
Lebesgue measurable function and let $p\geqslant 1.$ We say that
{\it $f$ satisfies the inverse Poletsky inequality at a point
$y_0\in \overline{f(D)}\setminus\{\infty\}$ with respect to
$p$-modulus,} if the relation
\begin{equation}\label{eq2*A}
M_p(\Gamma_f(y_0, r_1, r_2))\leqslant
\int\limits_{A(y_0,r_1,r_2)\cap f(D)} Q(y)\cdot
\eta^{\,p}(|y-y_0|)\, dm(y)
\end{equation}
holds for any $0<r_1<r_2<r_0<\infty$ and any Lebesgue measurable
function $\eta: (r_1,r_2)\rightarrow [0,\infty ]$ such that
\begin{equation}\label{eqA2}
\int\limits_{r_1}^{r_2}\eta(r)\, dr\geqslant 1\,.
\end{equation}
The definition of the relation~(\ref{eq2*A}) at the point
$y_0=\infty$ may be given by the using of the inversion
$\psi(y)=\frac{y}{|y|^2}$ at the origin. This means that $f$
satisfies the relation~(\ref{eq2*A}) for $y_0=0$ and
$\widetilde{Q}(y):=Q\left(\frac{y}{|y|^2}\right)$ instead $Q(y).$

\medskip
Quite a lot is known about the role of
relations~(\ref{eq2*A})--(\ref{eqA2}) in mapping theory, as well as
about the fulfillment of these conditions in specific classes, see,
e.g., \cite{MRSY}, \cite{Sev$_1$}, \cite{ST} and \cite{Va}. We also
indicate the application of these inequalities to the Orlicz–Sobolev
classes, see the last section of the manuscript.

\medskip
Let $h$ be a chordal metric in $\overline{{\Bbb R}^n},$
$h(x,\infty)=\frac{1}{\sqrt{1+{|x|}^2}},$
\begin{equation*}\label{eq3C}
h(x,y)=\frac{|x-y|}{\sqrt{1+{|x|}^2} \sqrt{1+{|y|}^2}}\qquad x\ne
\infty\ne y\,.
\end{equation*}
and let $h(E):=\sup\limits_{x,y\in E}\,h(x,y)$ be a chordal diameter
of a set~$E\subset \overline{{\Bbb R}^n}$ (see, e.g.,
\cite[Definition~12.1]{Va}).

\medskip
Given $p\geqslant 1,$ a non-decreasing function
$\Phi\colon\overline{{\Bbb R}^{+}}\rightarrow\overline{{\Bbb
R}^{+}},$ $a, b\in D,$ $a\ne b,$ $\delta>0$ we denote by
$\frak{F}^{\Phi, p}_{a, b, \delta}(D)$ the family of all open
discrete mappings $f:D\rightarrow {\Bbb R}^n,$ $n\geqslant 2,$ for
which there exists a Lebesgue measurable function $Q=Q_f:{\Bbb
R}^n\rightarrow [0, \infty]$ satisfying
relations~(\ref{eq2*A})--(\ref{eqA2}) at any point $y_0\in
\overline{{\Bbb R}^n}$ with
\begin{equation}\label{e3.3.1}
\int\limits_{{\Bbb
R}^n}\Phi(Q_f(y))\cdot\frac{dm(y)}{(1+|y|^2)^n}<\infty
\end{equation}
such that $h(f(a), f(b))\geqslant \delta.$ The following statement
holds.

\medskip
\begin{theorem}\label{th1}
{\it Let $n\geqslant 2,$ $p\in (n-1, n]$ and let $D$ be a domain in
${\Bbb R}^n.$ Let also $\Phi:\overline{{\Bbb R^{+}}}\rightarrow
\overline{{\Bbb R^{+}}}$ be an increasing convex function that
satisfies the condition
\begin{equation}\label{eq2A}
\int\limits_{\delta}^{\infty}\frac{d\tau}{\tau(\Phi^{\,-1}(\tau))^{\frac{1}{p-1}}}=\infty
\end{equation}
for some~$\delta>\Phi(0).$ Assume that, the family $\frak{F}^{Q,
p}_{a, b, \delta}(D)$ is equicontinuous at $a$ and $b.$ Then for
every compactum $K$ in $D$ and for every $0<\varepsilon<{\rm
dist}\,(K,
\partial D)$ there exists
$r_0=r_0(\varepsilon, K)>0$ which does not depend on $f,$ such that
$f(B(x_0, \varepsilon))\supset B_h(f(x_0), r_0)$
for all $f\in\frak{F}^{\Phi, p}_{a, b, \delta}(D)$ and all $x_0\in
K,$ where $B_h(f(x_0), r_0)=\{w\in \overline{{\Bbb R}^n}: h(w,
f(x_0))<r_0\}.$}
\end{theorem}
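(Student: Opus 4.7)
The plan is to argue by contradiction. Suppose the conclusion fails for some compactum $K\subset D$ and some $\varepsilon_0\in(0,{\rm dist}(K,\partial D))$. Then one can extract sequences $f_k\in\frak{F}^{\Phi, p}_{a,b,\delta}(D)$, $x_k\in K$, and $w_k\in\overline{{\Bbb R}^n}$ with $h(w_k, f_k(x_k))\to 0$ while $w_k\notin f_k(B(x_k,\varepsilon_0))$. The first step is to invoke the integral condition~(\ref{eq2A}) together with standard modulus/capacity estimates of the type used in \cite{RSY} and \cite{RS}: these convert $\Phi$-integrability of $Q_f$ into a uniform modulus-of-continuity bound for every $f\in\frak{F}^{\Phi,p}_{a,b,\delta}(D)$ on each compact subset of $D$ with respect to the chordal metric $h$. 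Hence the family is equicontinuous, and passing to a subsequence I may assume that $f_k\to f_0$ locally uniformly on $D$ and that $x_k\to x_0\in K$, $w_k\to w_0=f_0(x_0)$.

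Next I would establish that $f_0$ is non-constant, open and discrete. Non-constancy follows from the assumed equicontinuity at $a$ and $b$: since $h(f_k(a),f_k(b))\geqslant\delta$ for all $k$, the limit satisfies $h(f_0(a),f_0(b))\geqslant\delta$. The inverse Poletsky inequality~(\ref{eq2*A}) is inherited by the limit (with a majorant extracted from the $\liminf$ of $Q_{f_k}$, whose $\Phi$-integral remains bounded because $\Phi$ is increasing and convex). Openness and discreteness of such a non-constant limit, under the hypothesis $p\in(n-1,n]$ together with~(\ref{eq2A}), is exactly the content of the convergence-type theorems developed in \cite{Cr$_3$} and in the authors' earlier work \cite{ST$_1$}--\cite{ST$_2$}, which I would apply here.

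To derive the contradiction, observe that by openness of $f_0$ the set $f_0(B(x_0,\varepsilon_0/2))$ is a neighborhood of $w_0=f_0(x_0)$, so there is $r_1>0$ with $B_h(w_0,r_1)\subset f_0(B(x_0,\varepsilon_0/2))$. For large $k$ one has $w_k\in B_h(w_0,r_1/2)$ and $x_k\in B(x_0,\varepsilon_0/4)$, and I claim $w_k\in f_k(B(x_k,\varepsilon_0))$, which directly contradicts the choice of $w_k$. The claim is proved by the following topological/modulus argument: using discreteness of $f_0$ and~(\ref{eq2*A}) applied to annuli centered at $w_0$, one obtains a uniform lower bound $\liminf_k h(f_k(x_k),f_k(S(x_0,\varepsilon_0)))>0$, so that $f_k(S(x_0,\varepsilon_0))$ stays away from $w_k$. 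Combined with the fact that $w_k$ is a limit of values of the $f_k$ on the smaller ball (via $f_0$-preimages of $w_k$ and uniform convergence $f_k\to f_0$), the point $w_k$ is enclosed by $f_k(S(x_0,\varepsilon_0))$ and thus lies in the image of the open ball.

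The main obstacle is precisely this last topological step: transferring the inclusion $w_k\in f_0(B(x_0,\varepsilon_0/2))$ to $w_k\in f_k(B(x_k,\varepsilon_0))$ for large $k$. Uniform convergence $f_k\to f_0$ only gives $f_k(y)\to w_k$ along a preimage $y$ of $w_k$ under $f_0$; to promote this to an actual hit one must show that the boundary images $f_k(S(x_k,\varepsilon_0))$ do not collapse onto $f_0(x_0)$, which is where the inverse Poletsky inequality, the divergence~(\ref{eq2A}), and discreteness of $f_0$ must be used jointly. Once this boundary non-collapse is secured, the contradiction is immediate and the theorem follows.
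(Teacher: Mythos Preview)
Your approach has a genuine circularity problem. You invoke the convergence results of \cite{Cr$_3$} and \cite{ST$_1$}--\cite{ST$_2$} (or the analogue in this paper, Theorem~\ref{th2}) to conclude that the limit $f_0$ is open and discrete. But those results are proved \emph{using} the Koebe-type covering theorem you are trying to establish: in this paper, Theorem~\ref{th2} explicitly calls Theorem~\ref{th1} in its proof, and the same logical order holds in the cited references. So you cannot use openness/discreteness of the limit as an input here.

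There is a second gap. You claim that the $\Phi$-integrability of $Q_f$, together with~(\ref{eq2A}), yields equicontinuity of the whole family on compact subsets of $D$. The theorem only \emph{assumes} equicontinuity at the two points $a$ and $b$; equicontinuity elsewhere is neither assumed nor known in general for $p\in(n-1,n)$ (Remark~\ref{rem2} and the surrounding discussion indicate that such equicontinuity is available only for $p=n$ with a fixed $Q\in L^1$). Without global equicontinuity you cannot pass to a locally uniform limit $f_0$ at all, so the entire strategy collapses. Finally, even granting a limit, the step you flag as the ``main obstacle''---upgrading $w_k\in f_0(B(x_0,\varepsilon_0/2))$ to $w_k\in f_k(B(x_k,\varepsilon_0))$---is left as a sketch and would require a degree argument you have not supplied.

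The paper's proof avoids all of this by a direct modulus argument that never extracts a limit mapping. It takes a point $y_m\in B_h(f_m(x_m),1/m)\setminus f_m(B(x_m,\varepsilon_1))$, joins $f_m(x_m)$ to $y_m$ by a segment $\beta_m$, and lifts $\beta_m$ maximally under $f_m$ inside $B(x_m,\varepsilon_1)$ (Proposition~\ref{pr3}). Because $y_m$ is \emph{not} in the image, the lift $\alpha_m$ must run out to $S(x_m,\varepsilon_1)$, so $h(\overline{|\alpha_m|})$ is bounded below. A N\"akki-type estimate (Lemma~\ref{lem6}) then gives a uniform lower bound $M_p(\Gamma(\overline{|\alpha_m|},E,D))\geqslant R_0>0$, where $E$ is a small ball at $a$ whose $f_m$-image, by the assumed equicontinuity at $a$, stays in a fixed chordal ball disjoint from a neighbourhood of $y_0=\lim f_m(x_m)$. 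On the other hand, every path in this family has $f_m$-image crossing the annulus $A(y_0,2^{-k},\varepsilon_2)$, so the inverse Poletsky inequality~(\ref{eq2*A}) together with the test function $\eta_k(t)=\psi_k(t)/I_k$ and Lemma~\ref{lem1} forces $M_p(\Gamma_k)\leqslant \omega_{n-1}/I_k^{p-1}\to 0$, contradicting the lower bound. This is the argument you should reproduce; the lifting lemma and the lower modulus estimate replace the limit-mapping machinery entirely.
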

\medskip
\begin{remark}\label{rem1}
Assume that, $Q$ satisfies~(\ref{e3.3.1}) with some
$\Phi\colon\overline{{\Bbb R}^{+}}\rightarrow\overline{{\Bbb
R}^{+}}.$ Set
$$\widetilde{Q}(y)=\begin{cases}Q(y), & Q(y)\geqslant 1\\ 1, & Q(y)<1\end{cases}\,.$$
Observe that $\widetilde{Q}(y)$ satisfies (\ref{e3.3.1}). Indeed,
\begin{gather*}
\int\limits_D\Phi(\widetilde{Q}(y))\frac{dm(y)}{\left(1+|y|^2\right)^n}=
\int\limits_{\{y\in D: Q(y)< 1
\}}\Phi(\widetilde{Q}(y))\frac{dm(y)}{\left(1+|y|^2\right)^n}\\+
\int\limits_{\{y\in D: Q(y)\geqslant 1\}
}\Phi(\widetilde{Q}(y))\frac{dm(y)}{\left(1+|y|^2\right)^n}\leqslant
M_0+\Phi(1)\int\limits_{{\Bbb
R}^n}\frac{dm(y)}{\left(1+|y|^2\right)^n}=M^{\,\prime}_0<\infty\,.
\end{gather*}
\end{remark}

\begin{remark}
If $p=n$ and $Q\in L^1({\Bbb R}^n),$ then the family $\frak{F}^{Q,
p}_{a, b, \delta}(D)$ is equicontinuous. Indeed, let $\frak{F}_Q(D)$
be a family of all open discrete mappings in $D$ such that the
relations~(\ref{eq2*A})--(\ref{eqA2}) holds for any $y_0\in f(D)$
with $p=n.$ If $Q\in L^1({\Bbb R}^n),$ then for any $x_0\in D$ and
any $r_0>0$ such that $0<r_0<{\rm dist}(x_0,
\partial D)$ the inequality
$
|f(x)-f(x_0)|\leqslant\frac{C_n\cdot (\Vert
Q\Vert_1)^{1/n}}{\log^{1/n}\left(1+\frac{r_0}{2|x-x_0|}\right)}
$
holds for any $x, y\in B(x_0, r_0)$ and $f\in \frak{F}_Q(D),$ where
$\Vert Q\Vert_1$ denotes the $L^1$-norm of $Q$ in ${\Bbb R}^n$ and
$C_n>0$ is some constant depending only on $n.$ In particular,
$\frak{F}_Q(D)$ is equicontinuous in $D,$ see, e.g.,
\cite[Theorem~1.1]{SSD}. Thus, Theorem~\ref{th1} holds for
$\frak{F}^{Q, n}_{a, b, \delta}(D)$ under the condition $Q\in
L^1({\Bbb R}^n),$ see e.g.~\cite{ST}.
\end{remark}

\medskip
Note that the above analogue of Koebe's theorem has an important
application in the field of convergence of mappings. In this
connection, we first recall the classical result on the convergence
of quasiregular mappings (see, for example,
\cite[Theorem~9.2.II]{Re}).

\medskip
{\bf Theorem B.} Let $f_j:D\rightarrow {\Bbb R}^n,$ $n\geqslant 2,$
$j=1,2,\ldots,$ be a sequence of $K$-quasiregular mappings
converging to some mapping $f:D\rightarrow {\Bbb R}^n$ as
$j\rightarrow\infty$ locally uniformly in $D.$ Then either $f$ is
$K$-quasiregular, of $f$ is a constant. In particular, in the first
case $f$ is discrete and open (see \cite[Theorems~6.3.II
and~6.4.II]{Re}).

\medskip
As for the classes we are studying in~(\ref{eq2*A})--(\ref{eqA2}),
we have the following analogue of Theorem~B,
cf.~\cite[Theorem~1.1]{Cr}.

\medskip
\begin{theorem}\label{th2}
{\it\, Let $D$ be a domain in ${\Bbb R}^n,$ $n\geqslant 2,$ let
$p>n-1$ and let $\Phi\colon\overline{{\Bbb
R}^{+}}\rightarrow\overline{{\Bbb R}^{+}}$ be a non-decreasing
function. Let $f_j:D\rightarrow {\Bbb R}^n,$ $n\geqslant 2,$
$j=1,2,\ldots,$ be a sequence of open discrete mappings satisfying
the conditions~(\ref{eq2*A})--(\ref{eqA2}) with some function
$Q=Q_j:{\Bbb R}^n\rightarrow[0, \infty]$ at any point $y_0\in
\overline{{\Bbb R}^n}$ for which~(\ref{e3.3.1}) holds with
$Q_{f_j}:=Q_j$ and converging to some mapping $f:D\rightarrow {\Bbb
R}^n$ as $j\rightarrow\infty$ locally uniformly in $D.$ Assume that
the condition~(\ref{eq2A}) holds. Then either $f$ is a constant, or
$f$ is open and discrete.}
\end{theorem}

\medskip
\begin{remark}\label{rem2}
For the case $p=n$ and some another conditions on the function~$Q,$
Theorem~\ref{th2} was established in~\cite{ST}, except for the
assertion about the discreteness of the limit mapping. In~\cite{Cr},
the mentioned discreteness was proved even for the case of a
non-conformal modulus. Our contribution to this result is that we do
not require the family of mappings in~(\ref{eq2*A})--(\ref{eqA2}) to
have a fixed majorant $Q.$ The proof of openness is similar
to~\cite{ST}, the methodology of proving discreteness is
from~\cite{Cr}.
\end{remark}

\section{Preliminaries}

Let $(X, \mu)$ be a metric space with measure $\mu.$ For each real
number $n\ge 1,$ we define {\it the Loewner function} $\phi_n:(0,
\infty)\rightarrow [0, \infty)$ on $X$ as
$\phi_n(t)=\inf\{M_n(\Gamma(E, F, X)): \Delta(E, F)\leqslant t\},$
where the infimum is taken over all disjoint nondegenerate continua
$E$ and $F$ in $X$ and
$\Delta(E, F):=\frac{{\rm dist}\,(E, F)}{\min\{{\rm diam\,}E, {\rm
diam\,}F\}}.$
A pathwise connected metric measure space $(X, \mu)$ is said to be a
{\it Loewner space} of exponent $n,$ or an $n$-Loewner space, if the
Loewner function $\phi_n(t)$ is positive for all $t> 0$ (see
\cite[Section~2.5]{MRSY} or \cite[Ch.~8]{He}). Observe that ${\Bbb
R}^n$ and ${\Bbb B}^n\subset {\Bbb R}^n$ are Loewner spaces (see
\cite[Theorem~8.2 and Example~8.24(a)]{He}). As known, the condition
$\mu(B(x_0, r))\geqslant C\cdot r^n$ holds in Loewner spaces $X$ for
a constant $C>0$, every point $x_0\in X$  and all $r<{\rm diam}\,X.$
The following definition can be found in \cite[section~1.4,
ch.~I]{He} or \cite[section~1]{AS}. A measure $\mu$ in a metric
space is called doubling if all balls have finite and positive
measure and there is a constant $C\geqslant 1$ such that $\mu(B(x_0,
2r))\le C\cdot \mu(B(x_0, r))$ for every $x_0\in X$ and all $r>0$.
We also call a metric measure space $(X, \mu)$ {\it doubling} if
$\mu$ is a doubling measure. A~metric space $(X,d,\mu)$ is called
{\it\,$\widetilde{Q}$-Ahlfors-regular} for some
$\widetilde{Q}\geqslant 1$ if, for any $x_0\in X$ and some constant
$C\geqslant 1$,
$
\frac{1}{C}R^{\widetilde{Q}}\leqslant \mu(B(x_0, R))\leqslant
CR^{\widetilde{Q}}\,.
$
As is well known, Ahlfors $\alpha$-regular spaces have Hausdorff
dimension $\alpha$ (see \cite[p.~61--62]{He}). Let $(X,d,\mu)$ be
a~metric measure space with metric~$d$ and a~locally finite Borel
measure~$\mu$. Following~\cite{He}, \S\,7.22, a~Borel function
$\rho\colon X\rightarrow [0,\infty]$ is said to be an {\it\,upper
gradient} of a~function $u\colon X\rightarrow {\Bbb R}$ if $
|u(x)-u(y)|\leqslant \int\limits_{\gamma}\rho\,|dx| $ for any
rectifiable path~$\gamma$ connecting the points $x$ and $y\in X$,
where, as usual, $\displaystyle\int\limits_{\gamma}\rho\,ds$~denotes
the linear integral of the~function~$\rho$ over the~path~$\gamma$.
We say that a~space~$X$ admits the $(1;p)$-Poincar\'e inequality if
there exist constants $C\geqslant 1$ and $\tau>0$ such that
$$
\frac{1}{\mu(B)}\int\limits_{B}|u-u_B|\,d\mu(x)\leqslant
C(\operatorname{diam}B) \biggl(\frac{1}{\mu(\tau
B)}\int\limits_{\tau B}\rho^p\,d\mu(x)\biggr)^{1/p}
$$
for any ball $B\subset X$ and arbitrary locally bounded continuous
function $u\colon X\rightarrow {\Bbb R}$ and any upper
gradient~$\rho$ of~$u$, where
$
u_B:=\frac{1}{\mu(B)}\int\limits_{B}u\,d\mu(x).
$
The following result holds (see \cite[Proposition~4.7]{AS}).

\begin{proposition}\label{pr_2}
Let $X$ be a $Q$-Ahlfors regular metric measure space that supports
$(1; p)$-Poincar\'{e} inequality for some $p>1$ such that
$Q-1<p\leqslant Q.$ Then there exists a constant $M>0$ having the
property that, for $x\in X,$ $R>0$ and continua $E$ and $F$ in $B(x,
R),$
$M_p(\Gamma(E, F, X))\geqslant \frac{1}{M} \cdot\frac{\min\{{\rm
diam}\,E, {\rm diam}\,F\}}{R^{1+p-Q}}\,.$
\end{proposition}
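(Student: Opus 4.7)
The strategy is to combine the equivalence of $p$-modulus with $p$-capacity with the $(1;p)$-Poincar\'e inequality and $Q$-Ahlfors regularity in order to extract the stated lower bound. Since $X$ is $Q$-Ahlfors regular and admits a $(1;p)$-Poincar\'e inequality with $Q-1<p\leqslant Q,$ a theorem of Heinonen--Koskela ensures that $X$ is a $p$-Loewner space, which furnishes the qualitative backbone for the desired quantitative estimate involving $R.$

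To implement this, I would fix an arbitrary admissible function $\rho\in {\rm adm}\,\Gamma(E,F,X)$ and introduce the standard potential
$$
u(z):=\min\Bigl\{1,\ \inf\limits_{\gamma}\int\limits_{\gamma}\rho\,|dx|\Bigr\},
$$
where the infimum is taken over all rectifiable paths joining $E$ to $z.$ Then $u\equiv 0$ on $E,$ $u\equiv 1$ on $F,$ and $\rho$ is an upper gradient of $u.$ Applying the $(1;p)$-Poincar\'e inequality to $u$ on a ball $B^{\,\prime}$ containing $E\cup F$ of radius comparable to $R$ yields
$$
\frac{1}{\mu(B^{\,\prime})}\int\limits_{B^{\,\prime}}|u-u_{B^{\,\prime}}|\,d\mu(x)\leqslant C\,{\rm diam}(B^{\,\prime})\left(\frac{1}{\mu(\tau B^{\,\prime})}\int\limits_{\tau B^{\,\prime}}\rho^{\,p}\,d\mu(x)\right)^{1/p}.
$$

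The core of the argument is to bound the left-hand side from below using the thickness of the continua $E$ and $F.$ Assuming without loss of generality $u_{B^{\,\prime}}\leqslant 1/2,$ the oscillation $|u-u_{B^{\,\prime}}|\geqslant 1/2$ persists on a suitable $r$-neighborhood of $F.$ In a $Q$-Ahlfors regular space, a continuum $F$ with ${\rm diam}(F)\geqslant d:=\min\{{\rm diam}\,E,\,{\rm diam}\,F\}$ satisfies a Minkowski-content bound of the form $\mu(N_r(F))\geqslant c\cdot d\cdot r^{Q-1}$ for $r\leqslant d/2,$ obtained by packing $F$ with on the order of $d/r$ disjoint balls of radius $r$ centered on $F.$ Combining this lower bound with $\mu(B^{\,\prime})\sim R^Q$ from Ahlfors regularity and rearranging the Poincar\'e estimate produces the target inequality with the factor $d/R^{1+p-Q}.$

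The main obstacle I anticipate is twofold. First, upgrading the Minkowski thickness of $F$ to a genuine pointwise lower bound for $u$ on $N_r(F)$ requires a chaining argument based on the upper gradient property, in order to guarantee $u\geqslant 1-\varepsilon$ on $N_r(F)$ (and symmetrically $u\leqslant\varepsilon$ on $N_r(E)$) for an appropriately chosen radius $r.$ Second, careful bookkeeping of exponents in the Poincar\'e bound and in the Ahlfors scaling is needed to verify that exactly the power $R^{1+p-Q}$ emerges; this is precisely where the restriction $Q-1<p\leqslant Q$ is used, since it simultaneously validates the Minkowski-type lower bound for continua (non-degeneracy of the exponent $Q-1\geqslant 0$) and keeps $1+p-Q\in (0,1],$ thereby controlling the non-conformal scaling of the $p$-modulus.
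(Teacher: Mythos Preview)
The paper does not supply a proof of this proposition; it is quoted directly from \cite[Proposition~4.7]{AS} as a black box, so there is no in-paper argument to compare against.

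As for your sketch itself: building the potential $u$ from an admissible $\rho$ and invoking the Poincar\'e inequality is the right framework, but the step you yourself flag as an obstacle is a genuine gap rather than a routine detail. From $u\equiv 1$ on $F$ one cannot conclude $u\geqslant 1-\varepsilon$ on the tube $N_r(F)$: the upper-gradient inequality $|u(z)-u(y)|\leqslant\int_\gamma\rho\,|dx|$ yields this only if $\int_\gamma\rho$ is already small on some short path from $y\in F$ to $z$, and for an arbitrary admissible $\rho$ there is no such control. Consequently the Minkowski-content bound $\mu(N_r(F))\geqslant c\,d\,r^{Q-1}$, though correct, does not by itself feed into a lower bound for $\int_{B^{\,\prime}}|u-u_{B^{\,\prime}}|\,d\mu$. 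The arguments in the Heinonen--Koskela circle of ideas (on which \cite{AS} builds) proceed differently: one exploits the connectedness of $E$ and $F$ through a telescoping chain of balls to obtain a pointwise estimate of the form
$$1=|u(x)-u(y)|\leqslant C\sum_j r_j\Bigl(\mu(B_j)^{-1}\int_{B_j}\rho^{\,p}\,d\mu\Bigr)^{1/p}$$
along a chain joining $x\in E$ to $y\in F$, and then combines this with maximal-function or weak-type bounds for the resulting Riesz-type potential. This bypasses the need to control $u$ off the continua and is where the exponent $R^{1+p-Q}$ emerges naturally.
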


\medskip
The following statement holds.

\medskip
\begin{lemma}\label{lem3}
{\it\, Let $n\geqslant 2,$ let $p\in (n-1, n],$ let $x_0\in {\Bbb
R}^n,$ let $\varepsilon_1>0,$ and let $A$ be a (non-degenerate)
continuum in $B(x_0, \varepsilon_1)\subset {\Bbb R}^n.$ Let $r>0$
and let $C_j,$ $j=1,2,\ldots, $ be a sequence of continua in $B(x_0,
\varepsilon_1)$ such that $h(C_j)\geqslant r,$
$h(C_j)=\sup\limits_{x, y\in C_j}h(x, y).$ Then there is $R_0>0$
such that $M_p(\Gamma(C_j, A, D))\geqslant R_0\quad\forall\,\,j\in
{\Bbb N}\,.$}
\end{lemma}

\begin{proof}
By comments given above, the unit ball ${\Bbb B}^n$ is Ahlfors
$n$-regular. Now, the ball $B(x_0, \varepsilon_1)$ is also is
Ahlfors $n$-regular. In addition, $(1;p)$-Poincar\'e inequality
holds for any $p\geqslant 1$ (see \cite[Theorem~10.5]{HK}). Now, the
desired statement follows by Proposition~\ref{pr_2}.~$\Box$
\end{proof}

\medskip
Let $E_0,$ $E_1$ be sets in $D\subset {\Bbb R}^n$. The following
estimate holds (see \cite[Theorem~4]{Car}).

\medskip
\begin{proposition}\label{pr1}
{\it Let $A(0, a, b)=\{a<|x|<b\}$ be a ring containing in $D\subset
{\Bbb R}^n$ such that $S(0, r)$ intersects $E_0$ and $E_1$ for any
$r\in (a,b)$ where $E_0\cap E_1=\varnothing.$ Then $M_p(\Gamma(E_0,
E_1, D))\geqslant \frac{2^nb_{n,p}}{n-p}(b^{n-p}-a^{n-p})$ for any
$p\in(n-1, n),$ where $b_{n,p}$ is a constant depending only $n$ and
$p$.}
\end{proposition}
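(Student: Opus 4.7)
The plan is to fix an arbitrary admissible function $\rho\in{\rm adm}\,\Gamma(E_0,E_1,D)$, extend it by zero outside $D$, and bound $\int_{{\Bbb R}^n}\rho^p\,dm$ from below by the quantity on the right-hand side of the proposition; the modulus estimate then follows by passing to the infimum over~$\rho$. Writing the integral in polar coordinates centred at the origin,
$$
\int_{{\Bbb R}^n}\rho^p\,dm\,=\,\int_0^\infty\int_{S(0,r)}\rho^p\,d\sigma_r\,dr\,,
$$
where $d\sigma_r$ is the $(n{-}1)$-dimensional surface measure on~$S(0,r)$, reduces the task to establishing, for almost every $r\in(a,b)$, a spherical lower bound of the form $\int_{S(0,r)}\rho^p\,d\sigma_r\geqslant 2^n b_{n,p}\,r^{n-1-p}$; integration in~$r$ from~$a$ to~$b$ then contributes the factor $(b^{n-p}-a^{n-p})/(n-p)$ appearing in the statement.

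For the spherical bound, fix $r\in(a,b)$ and set $F_j(r):=E_j\cap S(0,r)$; by hypothesis both $F_0(r)$ and $F_1(r)$ are non-empty, disjoint, closed subsets of the sphere. Any rectifiable curve on $S(0,r)$ joining $F_0(r)$ to $F_1(r)$ lies in the ring $A(0,a,b)\subset D$ and connects $E_0$ to $E_1$, hence belongs to $\Gamma(E_0,E_1,D)$. Consequently $\rho\big|_{S(0,r)}$ is admissible for the spherical curve family $\Gamma(F_0(r),F_1(r),S(0,r))$, whence
$$
\int_{S(0,r)}\rho^p\,d\sigma_r\,\geqslant\,M_p^{S(0,r)}\bigl(\Gamma(F_0(r),F_1(r),S(0,r))\bigr)\,.
$$

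It remains to bound the spherical modulus from below by a constant multiple of~$r^{n-1-p}$. Dilation invariance (which scales the surface measure by~$r^{n-1}$ and a test function by~$r^{-1}$) transfers the problem to the unit sphere, and monotonicity of $M_p$ with respect to the curve family further reduces the bound to one on the $p$-modulus between two arbitrary points $x_0\in F_0(r)$ and $x_1\in F_1(r)$; the infimum of this quantity over all pairs of points on~$S^{n-1}$ is attained by antipodal pairs and produces a positive constant $b_{n,p}$ depending only on~$n$ and~$p$. The main obstacle is precisely this step: since $p>n-1$ coincides with the Hausdorff dimension of the sphere, we are in the super-critical regime, Proposition~\ref{pr_2} does not apply directly, and a single-geodesic H\"older estimate of length~$\leqslant\pi r$ controls only $\int_{\gamma_r}\rho^p\,ds$ rather than the full sphere integral. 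I would resolve this either by a spherical symmetrisation replacing $F_0(r),F_1(r)$ by two antipodal caps (so that the extremal problem becomes a one-dimensional integral that can be computed explicitly) or by a super-critical variant of the Loewner/Poincar\'e inequality on~$S^{n-1}$. Combining this spherical bound with the Fubini identity of the first paragraph then yields the stated lower bound on $M_p(\Gamma(E_0,E_1,D))$.
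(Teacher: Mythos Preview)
The paper does not prove this proposition at all: it is quoted verbatim from Caraman~\cite[Theorem~4]{Car}, so there is no ``paper's own proof'' to compare against. Your Fubini-plus-spherical-slicing outline is in fact the standard route to such estimates and is essentially how Caraman proceeds, so the architecture is right.

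That said, your proposal is not a complete proof, and you have correctly identified where the hole is. Everything up to and including the inequality
\[
\int_{S(0,r)}\rho^{p}\,d\sigma_r \;\geqslant\; M_p^{S(0,r)}\bigl(\Gamma(F_0(r),F_1(r),S(0,r))\bigr)
\]
is solid, as is the reduction by monotonicity to the point-to-point modulus $M_p^{S^{n-1}}(\Gamma(\{x_0\},\{x_1\},S^{n-1}))$ and the scaling that produces the factor $r^{\,n-1-p}$. But the sentence ``I would resolve this either by a spherical symmetrisation \dots\ or by a super-critical variant of the Loewner/Poincar\'e inequality'' is precisely the content of the proposition: the entire difficulty is showing that this point-to-point $p$-modulus on the $(n{-}1)$-sphere is bounded below by a positive constant $b_{n,p}$ when $p>n-1$. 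Neither of your two proposed mechanisms is actually carried out, and the symmetrisation option in particular requires care (one must check that cap symmetrisation preserves admissibility for the relevant curve family, which is not automatic for singleton targets).

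A cleaner way to close the gap, and the one used in the literature, is to recognise that positivity of the point-to-point modulus on an $(n{-}1)$-dimensional space for $p>n-1$ is equivalent to the positivity of the $p$-capacity of a single point in dimension $n-1$, which is an explicit radial computation: the extremal $\rho$ near $x_0$ behaves like $t^{-(n-2)/(p-1)}$ at geodesic distance $t$, and the condition $p>n-1$ is exactly what makes $\int_0^{\pi} t^{\,(n-2)(1-p/(p-1))}\,dt$ finite, yielding a positive lower bound. Once you insert that computation (or cite it), your argument becomes complete and matches Caraman's.
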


\medskip
The version of the following lemma is established in the case $p=n$
in \cite[Theorem~3.1]{Na$_2$}. For the case $p\ne n$ its proof is
similar and, therefore, is omitted.

\medskip
\begin{lemma}\label{lem5}
{\it\, Let $p>n-1,$ $F_1,$ $F_2,$ $F_3$ be three sets in a domain
$D$ and let $\Gamma_{i, j}=\Gamma(F_i, F_j, D),$ $1\leqslant i,
j\leqslant 3.$  Then
\begin{equation*}\label{eq32***}
M_p(\Gamma_{1, 2})\geqslant 3^{-p}\min\{M_p(\Gamma_{1, 2}),
M_p(\Gamma_{2, 3}), \inf M_p(\Gamma(|\gamma_{1, 3}|, |\gamma_{2,
3}|, D))\}\,,
\end{equation*}
where the infimum is taken over all rectifiable paths $\gamma_{1,
3}\in \Gamma_{1, 3}$ and $\gamma_{2, 3}\in \Gamma_{2, 3}.$ }
\end{lemma}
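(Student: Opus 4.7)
The plan is to reduce the inequality to an admissibility argument built around the $\rho$-distance function. Given any $\rho \in {\rm adm}\,\Gamma_{1,2}$, I would introduce the truncated $\rho$-distance functions
\[
u(x) := \min\{1,\, d_\rho(x, F_1)\}, \qquad v(x) := \min\{1,\, d_\rho(x, F_2)\},
\]
where $d_\rho(x,F) := \inf_\gamma \int_\gamma \rho\,|dy|$ is taken over all rectifiable paths in $D$ joining $x$ to $F$. Two basic observations drive the proof: first, $\rho$ is an upper gradient of both $u$ and $v$ on rectifiable paths, so $|u(x)-u(y)| \leqslant \int_\beta \rho\,|dz|$ along any such $\beta$ from $x$ to $y$, and likewise for $v$; second, since $d_\rho(F_1, F_2) \geqslant 1$ by admissibility of $\rho$ for $\Gamma_{1,2}$, the inequality $u + v \geqslant 1$ holds pointwise on $D$.

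With this setup the claim follows from a trichotomy on the position of $F_3$ relative to the level sets of $u$ and $v$. In \emph{Case A}, when $F_3 \subset \{u \geqslant 1/3\}$, every $\gamma \in \Gamma_{1,3}$ sees $u$ increase by at least $1/3$, so $\int_\gamma \rho \geqslant 1/3$ and $3\rho \in {\rm adm}\,\Gamma_{1,3}$, giving $\int\rho^p\,dm \geqslant 3^{-p}M_p(\Gamma_{1,3})$. \emph{Case B}, when $F_3 \subset \{v \geqslant 1/3\}$, is symmetric and gives $\int\rho^p\,dm \geqslant 3^{-p}M_p(\Gamma_{2,3})$.

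In the remaining \emph{Case C} there exist $y_1, y_2 \in F_3$ with $u(y_1) < 1/3$ and $v(y_2) < 1/3$. I would then choose near-minimizing rectifiable paths $\gamma_{1,3} \in \Gamma_{1,3}$ ending at $y_1$ with $\int_{\gamma_{1,3}}\rho < 1/3$, and $\gamma_{2,3} \in \Gamma_{2,3}$ ending at $y_2$ with $\int_{\gamma_{2,3}}\rho < 1/3$. Along $|\gamma_{1,3}|$ the inequality $u < 1/3$ holds, hence $v > 2/3$ by $u+v\geqslant 1$, while along $|\gamma_{2,3}|$ one has $v < 1/3$. Any path in $\Gamma(|\gamma_{1,3}|, |\gamma_{2,3}|, D)$ therefore produces an oscillation of $v$ exceeding $1/3$, so $3\rho$ is admissible for this family and $\int\rho^p\,dm \geqslant 3^{-p}M_p(\Gamma(|\gamma_{1,3}|, |\gamma_{2,3}|, D))$, which dominates the infimum over admissible pairs. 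Combining the three cases and passing to the infimum over $\rho \in {\rm adm}\,\Gamma_{1,2}$ yields the desired bound.

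The main delicate point I anticipate is making the level-set and near-minimizer constructions unambiguous for an arbitrary Borel measurable $\rho$ that may be unbounded or vanish on large sets, together with checking that $u$ and $v$ genuinely admit $\rho$ as an upper gradient along \emph{every} rectifiable path (not merely paths in $\Gamma_{1,2}$); both are standard facts for upper gradients in the Euclidean setting but deserve explicit comment. I do not see where the hypothesis $p > n-1$ enters this particular argument---the parameter $p$ appears only through the exponent in $3^{-p}$---so the proof should in fact work for every $p \geqslant 1$, with the restriction $p > n-1$ coming from compatibility with the other lemmas of the paper.
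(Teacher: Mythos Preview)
Your argument is correct and is essentially the same trichotomy the paper uses: either $3\rho$ is admissible for $\Gamma_{1,3}$, or for $\Gamma_{2,3}$, or there exist ``short'' paths $\gamma_{1,3},\gamma_{2,3}$ for which $3\rho$ is admissible for $\Gamma(|\gamma_{1,3}|,|\gamma_{2,3}|,D)$. The paper executes Case~C by a bare concatenation argument (a subarc of $\gamma_{1,3}$, then $\alpha$, then a subarc of $\gamma_{2,3}$ lies in $\Gamma_{1,2}$, forcing $\int_\alpha\rho\geqslant 1/3$) rather than via the $\rho$-distance functions $u,v$ and the relation $u+v\geqslant 1$, but the content is identical; your observation that $p>n-1$ is not used here is also correct.
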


\medskip
A version of the following lemma is established in the case $p=n$ in
\cite[Theorem~3.3]{Na$_2$}. We are interested in the case $p\ne n.$

\medskip
\begin{lemma}\label{lem4}
{\it\, Let $p>n-1,$ $F_1,$ $F_2,$ $F_3$ be three sets in a domain
$D,$ let $D$ contain the spherical ring $A(x_0, a, b),$ $x_0\in
{\Bbb R}^n,$ $0<a< b<\infty,$ let $F_3$ lie in $\overline{B(x_0,
a)},$ and let $\Gamma_{ij}$ be as in Lemma~\ref{lem5}. If one of the
three conditions

\medskip
(1) $F_i$ lies in ${\Bbb R}^n\setminus B(x_0, b),$ $i=1,2;$ (2)
$F_1$ lies in ${\Bbb R}^n\setminus B(x_0, b),$ and $F_2$ is
connected with $d(F_2)> 2b;$ (3) $F_i$ is connected with
$d(F_i)>2b,$ $i=1,2,$ is satisfied, then $$M(\Gamma_{1,
2})>\min\left\{M(\Gamma_{1,3}), M(\Gamma_{2,3}),
c_n\log\frac{b}{a}\right\}$$ whenever $p=n,$ where $c_n$ is a
positive constant depending only on $n.$ If $p\ne n,$
\begin{equation}\label{eq1A}
M_p(\Gamma_{1, 2})>\min\left\{M_p(\Gamma_{1,3}), M_p(\Gamma_{2,3}),
\frac{2^nb_{n,p}}{n-p}(b^{n-p}-a^{n-p})\right\}\,.
\end{equation}
 }
\end{lemma}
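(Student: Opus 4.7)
The plan is to combine the three-sets mechanism of Lemma~\ref{lem5} with the ring estimate of Proposition~\ref{pr1}, mirroring the original N\"akki argument for $p=n$ but using the $p$-modulus ring inequality in place of the conformal one. Starting with an arbitrary $\rho\in\operatorname{adm}\Gamma_{1,2}$, either $3\rho$ is admissible for $\Gamma_{1,3}$ or for $\Gamma_{2,3}$ --- producing the first two entries of the minimum in \eqref{eq1A} --- or else, as in the proof of Lemma~\ref{lem5}, there exist $\gamma_{1,3}\in\Gamma_{1,3}$ and $\gamma_{2,3}\in\Gamma_{2,3}$ with $\int_{\gamma_{i,3}}\rho\,|dx|<1/3$, so that $3\rho\in\operatorname{adm}\Gamma(|\gamma_{1,3}|,|\gamma_{2,3}|,D)$. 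The task then reduces to a uniform lower bound on $M_p(\Gamma(|\gamma_{1,3}|,|\gamma_{2,3}|,D))$ depending only on $n,p,a,b$.

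To apply Proposition~\ref{pr1} I need two disjoint continua in $D$ meeting every sphere $S(x_0,r)$, $r\in(a,b)$. In case~(1), $|\gamma_{i,3}|$ has this property, since $\gamma_{i,3}$ joins $F_i\subset\mathbb{R}^n\setminus B(x_0,b)$ to $F_3\subset\overline{B(x_0,a)}$ and must cross the ring by continuity. In cases~(2) and (3) the path $\gamma_{i,3}$ need not itself cross the ring, and I instead work with the enlarged continuum $C_i:=|\gamma_{i,3}|\cup F_i$ whenever $F_i$ carries the diameter hypothesis: $C_i$ is connected because the starting point of $\gamma_{i,3}$ lies in $F_i$, and $F_i$ cannot be contained in $\overline{B(x_0,b)}$ (whose diameter is $2b<d(F_i)$), so $C_i$ contains a point of $\overline{B(x_0,a)}$ and a point outside $\overline{B(x_0,b)}$, hence meets every intermediate sphere. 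The admissibility $3\rho\in\operatorname{adm}\Gamma(C_1,C_2,D)$ survives the enlargement because any $\alpha\in\Gamma(C_1,C_2,D)$ can be spliced with the appropriate sub-arc of $\gamma_{i,3}$ (or, when an endpoint of $\alpha$ already lies in $F_i$, with nothing at all) to produce a path of $\Gamma_{1,2}$ on which $\rho$ is admissible; this forces $\int_\alpha\rho\,|dx|\geq 1/3$ in every case.

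Proposition~\ref{pr1}, applied after a translation sending $x_0$ to the origin, then gives $M_p(\Gamma(C_1,C_2,D))\geq\frac{2^n b_{n,p}}{n-p}(b^{n-p}-a^{n-p})$, so that $\int_D\rho^p\,dm\geq 3^{-p}\cdot\frac{2^n b_{n,p}}{n-p}(b^{n-p}-a^{n-p})$. Taking the infimum over $\rho\in\operatorname{adm}\Gamma_{1,2}$ and combining with the first two alternatives yields \eqref{eq1A}, with the $3^{-p}$ absorbed into $b_{n,p}$. The main obstacle is precisely this enlargement of $|\gamma_{i,3}|$ to $C_i$ in cases~(2) and (3): enlarging the endpoint sets grows the path family and would push the Proposition~\ref{pr1} bound in the unhelpful direction if one tried to compare moduli directly, but the splicing observation above shows that admissibility of $3\rho$ propagates from the smaller family to the larger one, which is exactly what couples the three-sets mechanism of Lemma~\ref{lem5} with the ring estimate when $p\neq n$.
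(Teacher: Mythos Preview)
Your approach is essentially the paper's: it too enlarges $|\gamma_{i,3}|$ to $F_i\cup|\gamma_{i,3}|$ in cases~(2) and~(3), asserts that $\int_\alpha\rho\,|dx|\geqslant 1/3$ for every $\alpha\in\Gamma(F_1\cup|\gamma_{1,3}|,\,F_2\cup|\gamma_{2,3}|,\,D)$ (your splicing observation), and then applies Proposition~\ref{pr1} using that each sphere $S(x_0,t)$, $a<t<b$, meets both enlarged sets. You spell out the geometric reasons the enlarged continua cross the ring and why admissibility of $3\rho$ survives the enlargement, details the paper leaves implicit, but the logical structure is identical.
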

\begin{proof}
The case $p=n$ is discussed in detail in~\cite[Theorem~3.3]{Na$_2$}.
We should consider the case $p\ne n.$ We may assume that $F_1,$
$F_2,$ $F_3$ are nonempty sets. If (1) is satisfied, then the
assertion follows directly from Lemma~\ref{lem5} and
Proposition~\ref{pr1}. Assume next that (2) or (3) is satisfied.
Choose $\rho\in {\rm adm\,}\Gamma_{1, 2}.$ If at least one of the
conditions $\int\limits_{\gamma_{1,3}}\rho \,|dx|\geqslant 1/3,$
$\int\limits_{\gamma_{2,3}}\rho\, |dx|\geqslant 1/3$
holds for every rectifiable path $\gamma_{1,3}\in \Gamma_{1,3}$ or
$\gamma_{2,3}\in \Gamma_{2,3},$ respectively, then~(\ref{eq1A})
holds. Otherwise, $\int\limits_{\alpha}\rho \,|dx|\geqslant 1/3$
holds for every rectifiable path $\alpha\in \Gamma(F_1\cup
|\gamma_{1, 3}|, F_2\cup|\gamma_{2, 3}|, D).$ Therefore, since
$S(x_0, t)$ meets both $F_1\cup |\gamma_{1, 3}|$ and $F_2\cup
|\gamma_{2, 3}|$ for $a<t<b$ and since $D$ contains the spherical
ring $A(x_0, a, b),$ we obtain by Proposition~\ref{pr1} that
$M_p(\Gamma_{1,2})\geqslant
\frac{2^nb_{n,p}}{n-p}(b^{n-p}-a^{n-p})\,.
$
Thus, the relation~(\ref{eq1A}) holds, as required. Lemma is
proved.~$\Box$
\end{proof}

\medskip
Finally, we have the following, see \cite[Theorem~1.1]{SKN},
cf.~\cite[Lemma~1.15]{Na$_1$}.

\medskip
\begin{proposition}\label{pr4}
{\it Let $D$ be a domain in ${\Bbb R}^n,$ $n\geqslant 2,$ and let
$n-1<p\leqslant n.$ If $A$ and $A^{\,*}$ are (nondegenerate)
continua in $D,$ then $M_p(\Gamma(A, A^{\,*}, D))>0.$}
\end{proposition}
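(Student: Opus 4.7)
The plan is to produce a positive lower bound for $M_p(\Gamma(A, A^*, D))$ by exhibiting a suitable subfamily whose modulus is controlled by Proposition~\ref{pr_2}, via a chain-of-balls argument along a path in $D$ joining the two continua. Throughout, I exploit that open balls in ${\Bbb R}^n$ are $n$-Ahlfors regular and admit a $(1,p)$-Poincar\'e inequality for every $p\geqslant 1$ (as already used in the proof of Lemma~\ref{lem3}), so that Proposition~\ref{pr_2} applies on each ball in the chain whenever $n-1<p\leqslant n$.

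First, since $D$ is a domain, I would join a point $a_0\in A$ to a point $a_0^{*}\in A^{*}$ by a compact polygonal path $\beta\subset D$. Then $K:=A\cup A^{*}\cup|\beta|$ is a compactum in $D$, so $\rho:=\operatorname{dist}(K,\partial D)>0$ (set $\rho$ arbitrary if $D={\Bbb R}^n$); after shrinking, assume $\rho<\min\{\operatorname{diam} A,\operatorname{diam} A^{*}\}$. Cover $|\beta|$ by finitely many balls $B_i=B(y_i,\rho/10)$, $i=0,1,\dots,N$, with $y_0=a_0$, $y_N=a_0^{*}$, consecutive centers at distance at most $\rho/20$, so that each enlarged ball $B(y_i,\rho/2)$ lies in $D$ and each overlap $B_i\cap B_{i+1}$ contains a small closed line segment $\sigma_i$ of diameter $\geqslant\rho/40$. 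A standard continuum argument (since $A$ has diameter $>\rho$ and $a_0\in A$) provides a subcontinuum $\sigma_{-1}\subset A\cap\overline{B_0}$ with $\operatorname{diam}\sigma_{-1}\geqslant\rho/10$; analogously one obtains $\sigma_N\subset A^{*}\cap\overline{B_N}$.

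Next, Proposition~\ref{pr_2} applied in the ball $B(y_i,\rho/2)$, with $Q=n$ and the pair of continua $\sigma_{i-1},\sigma_i\subset B(y_i,\rho/2)$, yields
$$
M_p\bigl(\Gamma(\sigma_{i-1},\sigma_i,B(y_i,\rho/2))\bigr)\geqslant
\frac{1}{M}\cdot\frac{\min\{\operatorname{diam}\sigma_{i-1},\operatorname{diam}\sigma_i\}}{(\rho/2)^{1+p-n}}>0
\qquad (i=0,\dots,N).
$$
Chaining these estimates with Lemma~\ref{lem5}, in which at each step the auxiliary family of paths joining two subarcs is again shown to have positive $p$-modulus by a further application of Proposition~\ref{pr_2} inside the appropriate ball, I would obtain a positive lower bound on $M_p(\Gamma(\sigma_{-1},\sigma_N,D))$. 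Since $\Gamma(\sigma_{-1},\sigma_N,D)\subset\Gamma(A,A^{*},D)$ and $p$-modulus is monotone with respect to inclusion of path families, the desired conclusion $M_p(\Gamma(A,A^{*},D))>0$ follows.

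The main obstacle I expect is the iterative use of Lemma~\ref{lem5}: at each reduction, one of the three quantities appearing on the right-hand side is an infimum over auxiliary path families $\Gamma(|\gamma_{1,3}|,|\gamma_{2,3}|,D)$, and one must verify that this infimum is bounded below uniformly in the choice of liftings. This is handled by noticing that the relevant auxiliary curves $\gamma_{1,3},\gamma_{2,3}$ always contain subarcs of diameter comparable to $\rho$ inside some $B(y_i,\rho/2)$, so Proposition~\ref{pr_2} again applies with a uniform constant. The range $n-1<p\leqslant n$ is precisely what is needed so that Proposition~\ref{pr_2} is available in its full strength on every ball in the chain.
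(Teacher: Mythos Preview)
The paper does not prove Proposition~\ref{pr4}; it simply cites \cite[Theorem~1.1]{SKN} (cf.\ \cite[Lemma~1.15]{Na$_1$}). So there is no in-paper argument to compare against, and your proposal is an attempt to supply one.

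Your overall strategy---chain-of-balls plus Proposition~\ref{pr_2} on each ball, glued together by a N\"akki-type lemma---is sound and is in the spirit of how the paper handles the related Lemma~\ref{lem6}. There is, however, a genuine soft spot in the chaining step. When you apply Lemma~\ref{lem5} with $F_1=\sigma_{-1}$, $F_2=\sigma_N$ and an intermediate $F_3=\sigma_i$, the third term on the right is $\inf M_p(\Gamma(|\gamma_{1,3}|,|\gamma_{2,3}|,D))$, and your claim that ``the relevant auxiliary curves $\gamma_{1,3},\gamma_{2,3}$ always contain subarcs of diameter comparable to~$\rho$'' is not automatic from your setup. If you chain linearly (taking $F_3$ adjacent to $F_1$, e.g.\ $F_3=\sigma_0$), then $\gamma_{1,3}$ runs between two sets that both lie in $B(y_0,\rho/10)$ and may be arbitrarily short; Proposition~\ref{pr_2} then gives no uniform lower bound for that infimum. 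The same problem occurs symmetrically if $F_3$ is adjacent to $F_2$.

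The fix is easy but should be made explicit. Either (i) chain by bisection, taking $F_3=\sigma_{\lfloor N/2\rfloor}$ so that both $\gamma_{1,3}$ and $\gamma_{2,3}$ must exit the ball $B(y_{\lfloor N/2\rfloor},\rho/2)$ and hence carry subarcs of diameter at least $\rho/2-\rho/10$ there, and then recurse on the two half-chains; or (ii) replace Lemma~\ref{lem5} by Lemma~\ref{lem4}, where the third term is an explicit ring quantity $\frac{2^n b_{n,p}}{n-p}(b^{n-p}-a^{n-p})$ rather than an infimum over paths, so no diameter control on $\gamma_{1,3},\gamma_{2,3}$ is needed. With either modification your argument goes through and yields the desired positivity.
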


\section{Main Lemmas}

The following statement generalizes Lemma~\ref{lem3} for the case of
arbitrary continuum $A$ in $D,$ cf.~\cite[Theorem~3.1]{Na$_2$}.

\medskip
\begin{lemma}\label{lem6}
{\it\, Let $n\geqslant 2,$ let $p\in (n-1, n],$ let $x_0\in {\Bbb
R}^n,$ let $\varepsilon_1>0,$ and let $A^{\,*}$ be a
(non-degenerate) continuum in $D\subset {\Bbb R}^n.$ Let $r>0$ and
let $C_j,$ $j=1,2,\ldots, $ be a sequence of continua in $B(x_0,
\varepsilon_1)$ such that $h(C_j)\geqslant r,$
$h(C_j)=\sup\limits_{x, y\in C_j}h(x, y).$ Then there is
$R^{\,*}_0>0$ such that $M_p(\Gamma(C_j, A^{\,*}, D))\geqslant
R^{\,*}_0\quad\forall\,\,j\in {\Bbb N}\,.$}
\end{lemma}

\begin{proof}
Due to Lemma~\ref{lem3}, for any continuum $A\subset B(x_0,
\varepsilon_1)$ there exists $R_0>0$ such that
$
M_p(\Gamma(C_j, A, D))\geqslant R_0\quad\forall\,\,j\in {\Bbb N}\,.
$
Thus, we may consider that $A^{\,*}\subset {\Bbb R}^n\setminus
B(x_0, \varepsilon_1).$ In particular, $A\cap A^{\,*}= \varnothing.$
Choose $r_0>0$ such that $0<4r_0<\min\{r, d(A, A^{\,*})\}.$ Let
$A_1,\ldots, A_q$ be a finite covering of $A$ by closed balls
centered at the points $a_i\in A_i$ and of the radius $r_0,$
$i=1,\ldots, q.$ Denote $\delta_i:=M_p(\Gamma(A_i, A^{\,*}, D)).$
Recall that, $p$-modulus of families of paths joining two continua
in $D$ is positive for $n-1<p\leqslant n$ (see
Proposition~\ref{pr4}). Now, we set
$R_0^{\,*}=3^{\,-n}\min\left\{R_0/q, \delta_1,\ldots, \delta_q,
c_n\log 2\right\}$
for $p=n,$ where $c_n$ is a constant from Lemma~\ref{lem4}, and
$$R_0^{\,*}= 3^{\,-p}\min\left\{R_0/q, \delta_1,\ldots,
\delta_q, \frac{2^nb_{n,p}}{n-p}((2r_0)^{n-p}-r_0^{n-p})\right\}$$
for $p\ne n,$ where $b_{n,p}$ is a constant from
Proposition~\ref{pr1}. Fix $j\in {\Bbb N}.$ Due to the subadditivity
of the modulus of families of paths,
\begin{equation}\label{eq26***}
0<R_0 \leqslant M_p(\Gamma(A_i, C_j, D))\leqslant
M_p\left(\Gamma\left(\bigcup\limits_{i=1}^q A_i, C_j,
D\right)\right)\leqslant \sum\limits_{i=1}^q M_p(\Gamma(A_i, C_j,
D))\,.
\end{equation}
It follows from~(\ref{eq26***}) that $M_p(\Gamma(A_{i_0}, C_j,
D))\geqslant \delta/q$ at least for some $i_0\in \{1,\ldots,q\}.$
Since $A^{*}\cap B(a_i, 2r_0)=\varnothing$ and since $d(C_j)>4r_0,$
the assertion follows from Lemma~\ref{lem4} setting $F_1=A^{\,*},$
$F_2=C_j$ and $F_3=A_i.$~$\Box$
\end{proof}

Given a Lebesgue measurable function $Q:{\Bbb R}^n\rightarrow [0,
\infty]$ and a point $x_0\in {\Bbb R}^n$ we set
$
q_{x_0}(t)=\frac{1}{\omega_{n-1}t^{n-1}} \int\limits_{S(x_0,
t)}Q(x)\,d\mathcal{H}^{n-1}\,,
$
where $\mathcal{H}^{n-1}$ denotes $(n-1)$-dimensional Hausdorff
measure. The following lemma is proved in \cite[Lemma~2.1]{Sev$_2$}.

\medskip
\begin{lemma}\label{lem1}
{\it\, Let $1\leqslant p\leqslant n,$ and let $\Phi:[0,
\infty]\rightarrow [0, \infty] $ be a strictly increasing convex
function such that the relation $\int\limits_{\delta_0}^{\infty}
\frac{d\tau}{\tau\left[\Phi^{-1}(\tau)\right]^{\frac{1}{p-1}}}=
\infty$ holds for some $\delta_0>\tau_0:=\Phi(0).$ Let $\frak{Q}$ be
a family of functions $Q:{\Bbb R}^n\rightarrow [0, \infty]$ such
that
$
\int\limits_D\Phi(Q(x))\frac{dm(x)}{\left(1+|x|^2\right)^n}\
\leqslant M_0<\infty
$
for some $0<M_0<\infty.$ Now, for any $0<r_0<1$ and for every
$\sigma>0$ there exists $0<r_*=r_*(\sigma, r_0, \Phi)<r_0$ such that
$\int\limits_{\varepsilon}^{r_0}\frac{dt}{t^{\frac{n-1}{p-1}}q^{\frac{1}{p-1}}_{x_0}(t)}\geqslant
\sigma,$ $\varepsilon\in (0, r_*),$
for any $Q\in \frak{Q}.$ }
\end{lemma}

\medskip
Let $D\subset {\Bbb R}^n,$ $f:D\rightarrow {\Bbb R}^n$ be a discrete
open mapping, $\beta: [a,\,b)\rightarrow {\Bbb R}^n$ be a path, and
$x\in\,f^{\,-1}(\beta(a)).$ A path $\alpha: [a,\,c)\rightarrow D$ is
called a {\it maximal $f$-lifting} of $\beta$ starting at $x,$ if
$(1)\quad \alpha(a)=x\,;$ $(2)\quad f\circ\alpha=\beta|_{[a,\,c)};$
$(3)$\quad for $c<c^{\prime}\leqslant b,$ there is no a path
$\alpha^{\prime}: [a,\,c^{\prime})\rightarrow D$ such that
$\alpha=\alpha^{\prime}|_{[a,\,c)}$ and $f\circ
\alpha^{\,\prime}=\beta|_{[a,\,c^{\prime})}.$ If $\beta:[a,
b)\rightarrow\overline{{\Bbb R}^n}$ is a path and if
$C\subset\overline{{\Bbb R}^n},$ we say that $\beta\rightarrow C$ as
$t\rightarrow b,$ if the spherical distance $h(\beta(t),
C)\rightarrow 0$ as $t\rightarrow b$ (see \cite[Section~3.11]{MRV}),
where $h(\beta(t), C)=\inf\limits_{x\in C}h(\beta(t), x).$ The
following assertion holds (see~\cite[Lemma~3.12]{MRV}).

\medskip
\begin{proposition}\label{pr3}
{\it Let $f:D\rightarrow {\Bbb R}^n,$ $n\geqslant 2,$ be an open
discrete mapping, let $x_0\in D,$ and let $\beta: [a,\,b)\rightarrow
{\Bbb R}^n$ be a path such that $\beta(a)=f(x_0)$ and such that
either $\lim\limits_{t\rightarrow b}\beta(t)$ exists, or
$\beta(t)\rightarrow \partial f(D)$ as $t\rightarrow b.$ Then
$\beta$ has a maximal $f$-lifting $\alpha: [a,\,c)\rightarrow D$
starting at $x_0.$ If $\alpha(t)\rightarrow x_1\in D$ as
$t\rightarrow c,$ then $c=b$ and $f(x_1)=\lim\limits_{t\rightarrow
b}\beta(t).$ Otherwise $\alpha(t)\rightarrow \partial D$ as
$t\rightarrow c.$}
\end{proposition}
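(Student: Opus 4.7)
The plan is to construct the maximal lifting via a supremum of partial lifts built on a local path-lifting lemma, and then to analyze the behavior of $\alpha$ at the endpoint $c$ by a local argument near any accumulation point.

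First I would establish the \emph{local lifting lemma}: given $x\in D$ with $f(x)=y$, discreteness of $f^{-1}(y)$ lets me choose a connected, relatively compact neighborhood $U\ni x$ with $\overline{U}\subset D$ and $\overline{U}\cap f^{-1}(y)=\{x\}$; openness of $f$ together with compactness of $f(\partial U)$ then gives a neighborhood $V$ of $y$ with $V\subset f(U)$ and $V\cap f(\partial U)=\emptyset$. For any continuous $\gamma:[a,a']\to V$ with $\gamma(a)=y$, the set of $t\in[a,a']$ admitting a lift $\alpha:[a,t]\to U$ with $\alpha(a)=x$ is open (apply the local picture at $\alpha(t)$) and closed (any subsequential limit of $\alpha(t_k)$ sits in $\overline{U}$, cannot be on $\partial U$ since $V\cap f(\partial U)=\emptyset$, and is uniquely singled out from the discrete set $f^{-1}(\gamma(t))\cap U$ by continuity of $\gamma$ and $\alpha$), hence equals $[a,a']$.

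Next, uniqueness of the lift starting at a prescribed point is immediate from the local lemma, since the coincidence set of two lifts is both open and closed. Setting $c:=\sup\{c'\le b:\beta|_{[a,c')}\text{ admits a lift starting at }x_0\}$, the individual lifts patch to a single continuous $\alpha:[a,c)\to D$, maximal by construction. For the endpoint analysis, suppose the cluster set $C(\alpha,c)$ meets $D$ at some $x_1$. Then $\beta(t_k)=f(\alpha(t_k))\to f(x_1)\in f(D)$ along a sequence $t_k\to c$, which rules out the alternative $\beta(t)\to\partial f(D)$; hence the hypothesis forces $\lim_{t\to c^-}\beta(t)=f(x_1)$. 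Choosing a normal neighborhood $U$ of $x_1$ as in the local lemma and $t_0$ close to $c$ with $\alpha(t_0)\in U$ and $\beta([t_0,c))\subset V$, uniqueness of lifts forces $\alpha([t_0,c))\subset U$, so every cluster point of $\alpha$ lies in $\overline{U}\cap f^{-1}(f(x_1))=\{x_1\}$ and $\alpha(t)\to x_1$. If $c<b$, applying the local lemma at $x_1$ would extend $\alpha$ past $c$, contradicting maximality; hence $c=b$ and $\lim_{t\to b}\beta(t)=f(x_1)$. If instead $C(\alpha,c)\cap D=\emptyset$, compactness of $\overline{{\Bbb R}^n}$ in the chordal metric forces every subsequential limit of $\alpha(t)$ as $t\to c^-$ to lie in $\overline{D}\setminus D\subset\partial D$, whence $h(\alpha(t),\partial D)\to 0$, i.e., $\alpha(t)\to\partial D$.

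The main obstacle is the local lifting lemma itself — specifically, setting up $U$ and $V$ carefully enough that the open-and-closed argument prevents $\alpha(t)$ from escaping $U$ before it is supposed to. This is the content of the normal-neighborhood structure of discrete open maps and uses in an essential way both openness (to produce $V\subset f(U)$) and discreteness (to make $f^{-1}(y)\cap\overline{U}$ a singleton). Once it is in hand, the supremum construction and the endpoint dichotomy are essentially formal.
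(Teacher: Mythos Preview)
The paper does not prove this proposition; it is quoted from \cite[Lemma~3.12]{MRV} without argument, so there is no in-paper proof to compare against.

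Your proposal has a genuine gap. The uniqueness claim ``the coincidence set of two lifts is both open and closed'' fails at branch points: for $f(z)=z^2$ on ${\Bbb C}\cong{\Bbb R}^2$, $x_0=0$, and $\beta(t)=t$ on $[0,1)$, both $t\mapsto\sqrt{t}$ and $t\mapsto-\sqrt{t}$ are lifts of $\beta$ starting at $0$, and their coincidence set $\{0\}$ is not open in $[0,1)$. Without uniqueness your supremum construction does not patch: the individual lifts on $[a,c')$ for different $c'<c$ need not agree on overlaps, so you have not produced a single continuous $\alpha$ on $[a,c)$. Relatedly, the ``open'' half of your open-and-closed argument for the local lifting lemma is circular --- to extend a lift past $t$ you ``apply the local picture at $\alpha(t)$'', which is precisely the statement you are proving, and this cannot be bypassed by local inversion when $\alpha(t)$ is a branch point.

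The standard repair (as in MRV, or Rickman's book, Ch.~II \S3) replaces supremum-plus-patching by Zorn's lemma on the poset of partial lifts ordered by extension; chains have upper bounds (unions), so a maximal element exists without any appeal to uniqueness. Local \emph{existence} of lifts at branch points still needs an honest argument: with $U$ taken to be the $x$-component of $f^{-1}(V)$, the restriction $f|_U:U\to V$ is proper, open, and discrete, and one lifts paths via a compactness/connectedness argument on components of $(f|_U)^{-1}(|\gamma|)$ rather than by local inversion. Your endpoint dichotomy, on the other hand, is essentially correct once a maximal lift is in hand; the step you justify by ``uniqueness of lifts forces $\alpha([t_0,c))\subset U$'' is in fact just the observation that $\alpha$ cannot cross $\partial U$ because $\beta([t_0,c))\subset V$ and $V\cap f(\partial U)=\varnothing$, and needs no uniqueness.
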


\medskip
{\it Proof of Theorem~\ref{th1}}. Let us prove the theorem by
contradiction. Assume that its conclusion is wrong, i.e., there is a
compactum $K$ in $D$ for which there exists $0<\varepsilon_1<{\rm
dist}\,(K,
\partial D)$ such that for any $m\in {\Bbb N}$ there exists
$f_m\in \frak{F}^{Q, p}_{a, b, \delta}(D)$ and $x_m\in K$ such that
$B_h\left(f_m(x_m), \frac{1}{m}\right)\setminus f_m(B(x_m,
\varepsilon_1))\ne \varnothing,$
where $B_h\left(f_m(x_m), \frac{1}{m}\right)=\{w\in \overline{{\Bbb
R}^n}: h(w, f_m(x_m))<\frac{1}{m}\}.$ Let $y_m\in B_h(f_m(x_m),
\frac{1}{m})\setminus f_m(B(x_m, \varepsilon_0)).$ Due to the
compactness of $\overline{{\Bbb R}^n}$ we may consider that
$y_m\rightarrow y_0$ as $m\rightarrow\infty,$ where $y_0\in
\overline{{\Bbb R}^n}.$ Then also $f_m(x_m)\rightarrow y_0$ as
$m\rightarrow\infty.$ Let us firstly consider that $y_0\ne\infty.$
Passing to a subsequence, if necessary, we may consider that
$|f_m(x_m)-y_m|<1/m,$ $m=1,2,\ldots .$

\medskip
Let us join the points $a$ and $b$ with a path $\gamma:[0,
1]\rightarrow D,$ $\gamma(0)=a,$ $\gamma(1)=b,$ in $D.$ It follows
from the conditions of the lemma that $h(f_m(\gamma))\geqslant
\delta$ for any $m=1,2,\ldots ,$ $f_m\in \frak{F}^{Q, p}_{a, b,
\delta}(D).$ We may consider that the sequences $f_m(a)$ and
$f_m(b)$ converge to some points $z_1$ and $z_2$ as $m\rightarrow
\infty$ because $\overline{{\Bbb R}^n}$ is a compact space. Due to
the condition $h(f_m(a), f_m(b))\geqslant\delta,$ at least one of
the above points does not coincide with $y_0.$ Without loss of
generality, we may consider that $z_1\ne y_0.$ Since $f_m$ is
equicontinuous at $a,$ given $\sigma>0$ there is $\chi=\chi(\sigma)$
such that $h(f_m(x), f_m(a))<\sigma$ for $|x-a|<\chi.$ We may chose
numbers $r_1, r_2>0$ such small that
\begin{equation}\label{eq6A}
B_h(z_1, r_1)\cap B(y_0, r_2)=\varnothing\,.
\end{equation}
By the triangle inequality
$h(f_m(x), z_1)\leqslant h(f_m(x), f_m(a))+ h(f_m(a), z_1)<\sigma+
h(f_m(a), z_1)$
for $|x-a|<\chi.$ Since $h(f_m(a), z_1)\rightarrow 0$ as
$m\rightarrow\infty$ the latter relation implies that $f_m(x)\in
B_h(z_1, r_1)$ for sufficiently large $m$ and $\sigma=r_1/2.$ Let
$E=\{|x-a|<\chi\},$ where $\chi$ is mentioned above.

Join the points $y_m$ and $f_m(x_0)$ by the segment $\beta_m:[0,
1]\rightarrow \overline{B(f_m(x_m), 1/m)}$ such that
$\beta_m(0)=f_m(x_m)$ and $\beta_m(0)=y_m.$ Let $\alpha_m,$
$\alpha_m:[0, c_m)\rightarrow B(x_m, \varepsilon_1),$ be a maximal
$f_m$-lifting of $\beta_m$ in $B(x_m, \varepsilon_1)$ starting at
$x_0,$ which exists by Proposition~\ref{pr3}. By the same
Proposition either $\alpha_m(t)\rightarrow x_1\in B(x_m,
\varepsilon_1)$ as $t\rightarrow c_m-0$ (in this case, $c_m=1$ and
$f_m(x_1)=y_m$), or $\alpha_m(t)\rightarrow S(x_m, \varepsilon_1)$
as $t\rightarrow c_m.$ Observe that, the first situation is
excluded. Indeed, if $f_m(x_1)=y_m,$ then $y_m\in f_m(B(x_m,
\varepsilon_1)),$ that contradicts the choice of $y_m.$ Thus,
$\alpha_m(t)\rightarrow S(x_m, \varepsilon_1)$ as $t\rightarrow
c_m.$ Observe that, $\overline{|\alpha_m|}$ is a continuum in
$\overline{B(x_m, \varepsilon_1)}$ and
$h(\overline{|\alpha_m|})\geqslant h(x_m, S(x_m, \varepsilon_1)).$

Without loss of generality we may consider that $x_m\rightarrow
x_0\in K,$ $m\rightarrow\infty.$ Let us show that
$\overline{|\alpha_m|}\in B(x_0, \varepsilon^{\,*}_1),$ where
$\varepsilon^{\,*}_1>0$ is some number with
$0<\varepsilon^{\,*}_1<{\rm dist}\,(K,
\partial D).$ Indeed, let $x\in \overline{|\alpha_m|}.$ Since
$\overline{|\alpha_m|}$ is a continuum in $\overline{B(x_m,
\varepsilon_1)},$ by the triangle inequality we obtain that
$|x-x_0|\leqslant |x-x_m|+|x_m-x_0|<\varepsilon_1+|x_m-x_0|.$
Since $x_m-x_0\rightarrow 0$ as $m\rightarrow\infty$ and since
$0<\varepsilon_1<{\rm dist}\,(K,
\partial D),$ we may chose $r_*>0$ such small that $\varepsilon^{\,*}_1:=
\varepsilon_1+r_*<{\rm dist}\,(K,
\partial D).$ Thus, $\overline{|\alpha_m|}\in B(x_0, \varepsilon^{\,*}_1),$
$0<\varepsilon^{\,*}_1<{\rm dist}\,(K,
\partial D),$ as required.

Recall that, $h(\overline{|\alpha_m|})\geqslant h(x_m, S(x_m,
\varepsilon_1)).$ Let $h(x_m, S(x_m, \varepsilon_1))=h(x_m, w_m),$
where $w_m\in S(x_m, \varepsilon_1).$ Now, by the definition of a
chordal metrics,
\begin{equation}\label{eq3}
h(w_m, x_0)=\frac{|w_m-x_0|}{\sqrt{1+|w_m|^2}\sqrt{1+|x_0|^2}}
\end{equation}
and since by the triangle inequality $|w_m|\leqslant
|w_m-x_0|+|x_0|\leqslant |w_m-x_m|+|x_m-x_0|+|x_0|\leqslant
2\varepsilon_1+|x_0|$ for sufficiently large $m,$ we obtain
from~(\ref{eq3}) that
\begin{equation}\label{eq4_A}
h(x_m, w_m)=
\frac{|w_m-x_m|}{\sqrt{1+|w_m|^2}\sqrt{1+|x_0|^2}}\geqslant
\frac{\varepsilon_1}{\sqrt{1+|2\varepsilon_1+|x_0||^2}\sqrt{1+|x_0|^2}}:=r\,.
\end{equation}
Thus, $h(\overline{|\alpha_m|})\geqslant r$ for sufficiently large
$m\in {\Bbb N}.$ Now, we apply Lemma~\ref{lem6} for
$A:=E=\{|x-x_0|<\chi\},$ $C_m:=|\alpha_m|$ and $r$ which is defined
in~(\ref{eq4_A}). By this lemma we may find $R_0>0$ such that
\begin{equation}\label{eq5}
M_p(\Gamma(\overline{|\alpha_m|}, E, D))\geqslant R_0\,,\qquad
m=1,2,\ldots\,.
\end{equation}

Let us show that the relation~(\ref{eq5}) contradicts the definition
of the mapping $f_m$ in~(\ref{eq2*A})--(\ref{eqA2}). Indeed, since
$f_m(x_m)\rightarrow y_0$ as $m\rightarrow\infty,$ for any $k\in
{\Bbb N}$ there is a number $m_k\in {\Bbb N}$ such that
\begin{equation}\label{eq6}
B(f_{m_k}(x_{m_k}), 1/m_k)\subset B(y_0, 2^{\,-k})\,.
\end{equation}
Since $|\beta_m|\in B(f_m(x_m), 1/m),$ by~(\ref{eq6}) we obtain that
\begin{equation}\label{eq7}
|\beta_{m_k}|\subset B(y_0, 2^{\,-k})\,,\qquad k=1,2,\ldots\,.
\end{equation}
Let $k_0\in {\Bbb N}$ be such that $2^{\,-k}<\varepsilon_2,$ where
$\varepsilon_2<r_2$ and $r_2$ corresponds to~(\ref{eq6A}), and let
$\Gamma_k:=\Gamma(|\alpha_{m_k}|, E, D).$ In this case, $\Gamma_k>
\Gamma_{f_{m_k}}(y_0, 2^{\,-k}, \varepsilon_2)$ for sufficiently
large $k\in {\Bbb N},$ see~(\ref{eq6A}) and~(\ref{eq7}) (cf.~
Figure~\ref{fig1}).
\begin{figure}[h]
\centerline{\includegraphics[scale=0.4]{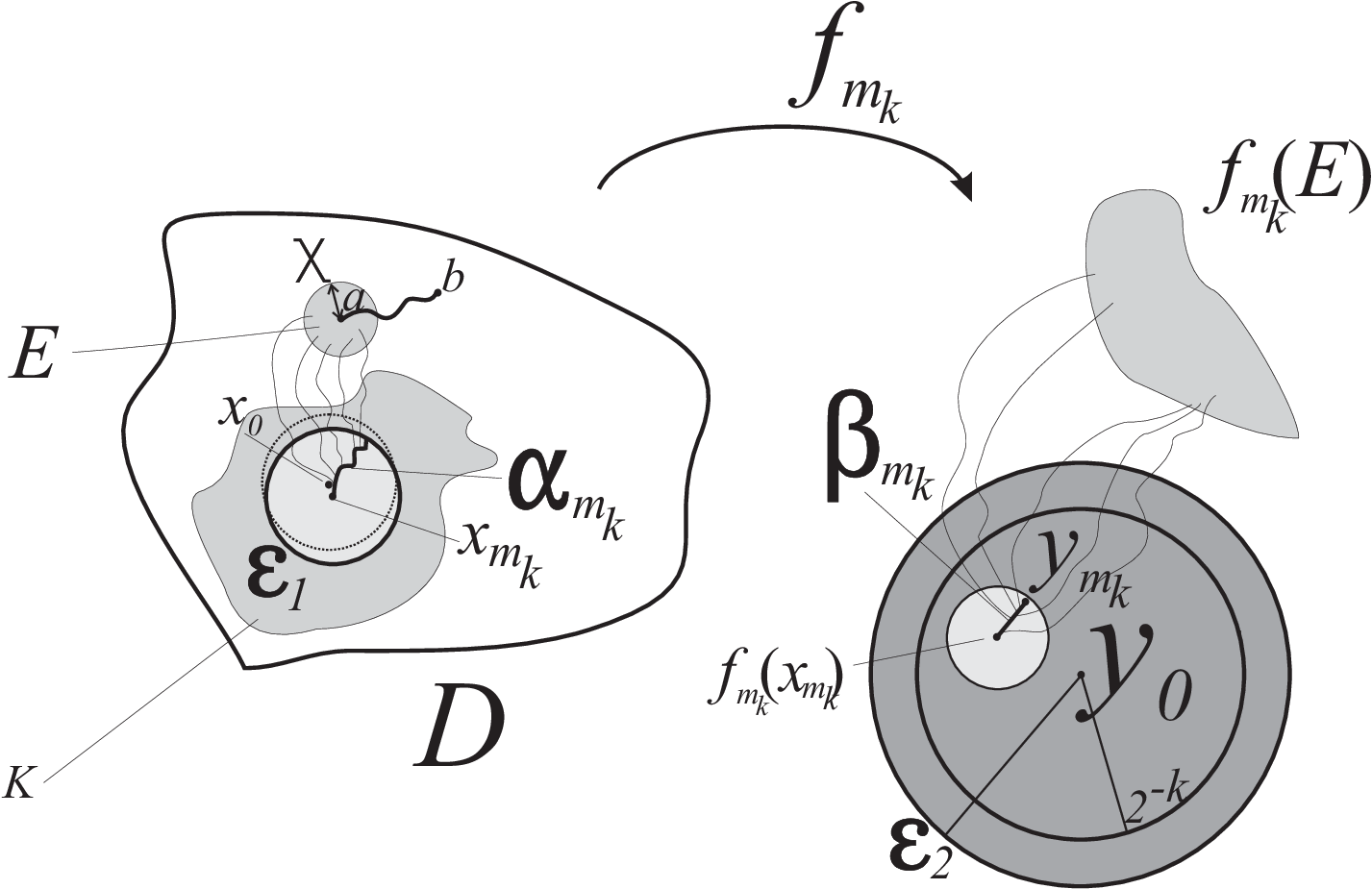}}
\caption{To the proof of Lemma~\ref{lem1}}\label{fig1}
\end{figure}
Now, by the definition of $f_{m_k}$ in~(\ref{eq2*A})--(\ref{eqA2}),
we obtain that
\begin{gather} M_p(\Gamma_k)=M_p(\Gamma(|\alpha_{m_k}|, E, D) )\leqslant
M_p(\Gamma_{f_{m_k}}(y_0, 2^{\,-k}, \varepsilon_2))\nonumber\\
\label{eq3J} \leqslant \int\limits_{A(y_0, 2^{\,-k}, \varepsilon_2)}
Q_{f_{m_k}}(y)\cdot\eta^{\,p}(|y-y_0|)\,dm(y)
\leqslant \int\limits_{A(y_0, 2^{\,-k}, \varepsilon_2)}\nonumber
\widetilde{Q}_{f_{m_k}}(y)\cdot\eta^{\,p}(|y-y_0|)\,dm(y)\,,
\end{gather}
where
$$\widetilde{Q}_{f_{m_k}}(y)=\begin{cases}Q_{f_{m_k}}(y), & Q_{f_{m_k}}(y)\geqslant 1\\
 1, & Q_{f_{m_k}}(y)<1\end{cases}\,,$$
and $\eta:(2^{\,-k}, \varepsilon_2)\rightarrow {\Bbb R}$ is an
arbitrary nonnegative Lebesgue measurable function with
$\int\limits_{2^{\,-k}}^{\varepsilon_2}\eta(r)\,dr\geqslant 1.$
Now, we set
$I_k=I(y_0, 2^{\,-k},
\varepsilon_2)=\int\limits_{2^{\,-k}}^{\varepsilon_2}\
\frac{dr}{r^{\frac{n-1}{p-1}}\widetilde{q}_{k,
y_0}^{\frac{1}{p-1}}(r)},$
where
$$\widetilde{q}_{k, y_0}(r)=\frac{1}{\omega_{n-1}r^{n-1}} \int\limits_{S(y_0,
r)}\widetilde{Q}_{f_{m_k}}(y)\,d\mathcal{H}^{n-1}\,.$$
By Lemma~\ref{lem1} and Remark~\ref{rem1}
\begin{equation}\label{eq2_1}
I_k=\int\limits_{2^{\,-k}}^{\varepsilon_2}\
\frac{dr}{r^{\frac{n-1}{p-1}}\widetilde{q}_{k,
y_0}^{\frac{1}{p-1}}(r)}\rightarrow\infty
\end{equation}
as $k\rightarrow\infty.$
Since $\widetilde{q}_{k, y_0}(r)\geqslant 1$ for a.e. $r$ and
by~(\ref{eq2_1}), $0<I_k<\infty$ for sufficiently large $k\in {\Bbb
N}.$ Set
$$ \psi_k(t)= \left \{\begin{array}{rr}
1/[t^{\frac{n-1}{p-1}}\widetilde{q}_{k, y_0}^{\frac{1}{p-1}}(t)], &
t\in (2^{\,-k}, \varepsilon_2)\ ,
\\ 0,  &  t\notin (2^{\,-k}, \varepsilon_2)\ .
\end{array} \right. $$
Let $\eta_k(t)=\psi_k(t)/I_k$ for $t\in (2^{\,-k}, \varepsilon_2)$
and $\eta_k(t)=0$ otherwise. Now, $\eta_k$ satisfies~(\ref{eqA2})
for $r_1=2^{\,-k}$ and $r_2=\varepsilon_2.$ Therefore,
by~(\ref{eq3J}), (\ref{eq2_1}) and by Fubini's theorem,
\begin{equation}\label{eq3_1}
M_p(\Gamma_k)\leqslant\frac{1}{I^{p}_k}\int\limits_{A(y_0, 2^{\,-k},
\varepsilon_2)}
Q(y)\cdot\psi_k^p(|y-y_0|)\,dm(y)=\frac{\omega_{n-1}}{I^{p-1}_k}\rightarrow
0
\end{equation}
as $k\rightarrow\infty.$ The relation~(\ref{eq3_1}) contradicts
with~(\ref{eq5}). The contradiction obtained above proves the lemma.

\medskip
Let now $y_0=\infty.$ Now, we set $\widetilde{f}_m:=\psi\circ f_m$
and $\widetilde{y}_m:=\psi(y_m),$ where $\psi(y)=\frac{y}{|y|^2}.$
Since $y_0=\infty,$ we obtain that $\widetilde{y}_m\rightarrow 0$ as
$m\rightarrow\infty.$ Besides that, since $f_m(x_m)-y_m\rightarrow
0$ as $m\rightarrow\infty,$ by the triangle inequality,
$ |\widetilde{f}_m(x_m)-\widetilde{y}_m|=|\psi(f_m(x_m))-\psi(y_m)|
\leqslant |\psi(f_m(x_m))|+|\psi(y_m)|\rightarrow 0,$ $m
\rightarrow\infty.$
Therefore, $\widetilde{f}_m(x_m)-\widetilde{y}_m\rightarrow 0$ as
$m\rightarrow\infty.$ Passing to a subsequence, we may also consider
that $|\widetilde{f}_m(x_m)-\widetilde{y}_m|<\frac{1}{m},$
$m=1,2,\ldots.$ In addition, by the assumption of the lemma, the
mappings $\widetilde{f}_m$ satisfy the
relations~(\ref{eq2*A})--(\ref{eqA2}) at the origin with a new
function $\widetilde{Q}(y):=Q\left(\frac{y}{|y_2|}\right).$
The conformal change of the variables $z=\frac{y}{|y|^2}$
corresponds to the jacobian $J(z, y)=\frac{1}{|y|^{2n}}.$ So,
observe that,
\begin{equation}\label{eq3A} \int\limits_{{\Bbb
R}^n}\Phi\left(Q_{f_m}\left(\frac{y}{|y|^2}\right)\right)\cdot\frac{dm(y)}{(1+|y|^2)^n}=
\int\limits_{{\Bbb
R}^n}\Phi(Q_{f_m}(z))\cdot\frac{dm(z)}{(1+|z|^2)^n}<\infty\,.
\end{equation}
The relation~(\ref{eq3A}) implies the possibility of applying above
technique to $Q_{f_m}\left(\frac{y}{|y|^2}\right)$ instead of
$Q_{f_m}(y).$ In particular, the relation~(\ref{eq3A}) together with
(\ref{eq2A}) implies by Lemma~\ref{lem1} that (\ref{eq2_1}) holds
for $\widetilde{\widetilde{q}}_{k,
y_0}(r):=\frac{1}{\omega_{n-1}r^{n-1}} \int\limits_{S(y_0,
r)}\widetilde{Q}_{f_{m_k}}\left(\frac{y}{|y|^2}\right)\,d\mathcal{H}^{n-1}$
instead of $\widetilde{q}_{k, y_0}.$ Finally, since the family
$\{f_m\}_{m=1}^{\infty}$ is equicontinuous at $a$ and $b,$ the
family $\{\widetilde{f}_m\}_{m=1}^{\infty}$ is also equicontinuous
at these points because $\widetilde{f}_m=\psi\circ f_m$ and $\psi$
is a fixed continuous function.

Now, taking into account what has been said, we repeat verbatim the
proof of Theorem~\ref{th1} for the case $y_0=0$ and for
$\widetilde{f}_m$ instead $f_m,$ $m=1,2,\ldots .$  Repeating this
proof, we obtain the desired conclusion.~$\Box$

\medskip
The following result is from~\cite[Proposition~4.4.I]{Ri}.

\medskip
\begin{proposition}\label{pr6}
{\it\,Let $f$ and $g$ be homotopic via a homotopy $h_t,$ $t\in
[0,1],$ $h_0=f$, $h_1=g.$ Suppose further that $y$ is $(h_t,
U)$-admissible for all $t\in [0,1].$ Then $\mu(y, f, U)=\mu(y, g,
U)$ (where $\mu(y, f, G)$ denotes the topological degree of $f$ at
$y$ with respect to $G$).}
\end{proposition}

\medskip
Given a mapping $f:D\,\rightarrow\,{\Bbb R}^n,$ a set $E\subset D$
and $y\,\in\,{\Bbb R}^n,$ we define the {\it multiplicity function
$N(y,f,E)$} as a number of preimages of the point $y$ in a set $E,$
i.e. $N(y,f,E)\,=\,{\rm card}\,\left\{x\in E: f(x)=y\right\}\,,$
\begin{equation}\label{eq1G}
N(f,E)\,=\,\sup\limits_{y\in{\Bbb R}^n}\,N(y,f,E)\,.
\end{equation}
The following result is from \cite[Proposition~4.10.I]{Ri}.

\medskip
\begin{proposition}\label{pr7}
{\it\, The relation $N(y, f, G)\leqslant\mu(y, f, G)$ is true for
$y\in {\Bbb R}^n\setminus f(\partial G)$ and any compact subdomain
$G$ of $D.$ }
\end{proposition}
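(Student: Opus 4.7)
The plan is to reduce the proposition to the additivity and excision properties of the topological degree, combined with the standard fact that an open discrete sense-preserving mapping has strictly positive local index at every point of its domain. The standing hypothesis on $f$ in the class under study (open, discrete, sense-preserving branched coverings of $\Bbb R^n$, as in Rickman's framework) will be used throughout.

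First I would observe that since $y\notin f(\partial G)$ and $\overline{G}$ is compact, the fiber $f^{\,-1}(y)\cap \overline{G}$ is compact and contained in the interior of $G$; by discreteness of $f$ it consists of finitely many points $x_1,\ldots,x_k$ with $k=N(y,f,G)$. If $k=0$ the inequality is trivial, so assume $k\geqslant 1$. Pick pairwise disjoint open neighborhoods $U_i\ni x_i$ with $\overline{U_i}\subset G$ and $f^{\,-1}(y)\cap \overline{U_i}=\{x_i\}$, and set $U=\bigcup_{i=1}^k U_i$. Then $y\notin f(\overline{G}\setminus U)$, so by excision $\mu(y,f,G)=\mu(y,f,U)$, and by additivity over the disjoint components,
$$
\mu(y,f,G)=\sum_{i=1}^k \mu(y,f,U_i)=\sum_{i=1}^k i(x_i,f),
$$
where $i(x_i,f)$ denotes the local topological index of $f$ at $x_i$.

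The last step is to invoke the classical structural fact for open discrete mappings of Euclidean domains: the local index $i(x,f)$ is a nonzero integer at every $x\in D$ (such a map is a local branched covering), and under the sense-preserving convention built into the definition of the class $\frak{F}^{\Phi, p}_{a, b, \delta}(D)$ one has $i(x,f)\geqslant 1$. Summing the $k$ positive integer contributions gives $\mu(y,f,G)\geqslant k=N(y,f,G)$, which is the desired inequality.

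The main obstacle I foresee is the justification of the positivity $i(x_i,f)\geqslant 1$: this is not a formal consequence of discreteness and openness alone, but rests on the \v{C}ernavski\u{\i}--V\"ais\"al\"a theorem that an open discrete map between domains of $\Bbb R^n$ is locally a branched covering (so the local index is a nonzero integer), together with the fact that such a map is either globally sense-preserving or globally sense-reversing, with the former being the normalization used in Rickman's setting. Granted this structural input, the computation above delivers the proposition immediately.
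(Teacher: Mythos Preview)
The paper does not supply a proof of this proposition at all: it simply quotes the statement from Rickman, \cite[Proposition~4.10.I]{Ri}, and uses it as a black box in the proof of Theorem~\ref{th2}. So there is no ``paper's own proof'' to compare against.

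Your reconstruction is precisely the standard argument behind Rickman's Proposition~I.4.10: excision plus additivity of the topological degree reduce $\mu(y,f,G)$ to the sum of local indices over the finite fiber, and then each local index is at least $1$. You are also right to flag that the inequality, in the form $N\leqslant\mu$ rather than $N\leqslant|\mu|$, genuinely needs sense-preservation; Rickman works under that standing hypothesis, and the paper's statement of the proposition tacitly inherits it even though the definition of $\frak{F}^{\Phi, p}_{a, b, \delta}(D)$ in Section~1 only says ``open discrete''. For the application in the proof of Theorem~\ref{th2} one could equally well argue with $|\mu|$ in place of $\mu$, since an open discrete map is globally either sense-preserving or sense-reversing; but as a proof of the proposition exactly as stated, your caveat is well placed.
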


\medskip
{\it Proof of Theorem~\ref{th2}.} Assume that $f$ is not a constant.
Let us to show that $f$ is open. Let $A$ be an open set and let
$x_0\in A.$ We need to show that, there is $\varepsilon^*>0$ such
that $B(f(x_0), \varepsilon^*)\subset f(A).$ Since $A$ is open,
there is $\varepsilon_1>0$ such that $\overline{B(x_0,
\varepsilon_1)}\subset A.$ Since $f$ is not constant, there are $a,
b\in D$ such that $f(a)\ne f(b).$ Let us join the points $a$ and $b$
by a path $\gamma$ in $D.$ We set $E:=|\gamma|.$ Now, $h(f_m(a),
f_m(b))\geqslant \frac{1}{2}\cdot h(f(a), f(b)):=\delta$ for
sufficiently large $m\in {\Bbb N}.$ By Theorem~\ref{th1} there is
$r_0>0,$ which does not depend on $m,$ such that
$B_h(f_m(x_0), r_0)\subset f_m(B(x_0, \varepsilon_1)),$
$m=1,2,\ldots.$

Set $\varepsilon^*:=r_0/2.$ Let $y\in B_h(f(x_0), r_0/2).$ Since by
the assumption $f_m(x)\rightarrow f(x)$ locally uniformly in $D,$ by
the triangle inequality we obtain that
$h(f_m(x_0), y)\leqslant h(f_m(x_0), f(x_0))+h(f(x_0), y)<r_0$
for sufficiently large $m\in {\Bbb N}.$ Thus, $y\in B_h(f_m(x_0),
r_0)\subset f_m(B(x_0, \varepsilon_1)).$ Consequently, $y=f_m(x_m)$
for some $x_m\in B(x_0, \varepsilon_1).$ Due to the compactness of
$\overline{B(x_0, \varepsilon_1)},$ we may consider that
$x_m\rightarrow z_0\in \overline{B(x_0, \varepsilon_1)}$ as
$m\rightarrow\infty.$ By the continuity of $f$ in $A,$ since
$\overline{B(x_0, \varepsilon_1)}\subset A,$ we obtain that
$f(x_m)\rightarrow f(z_0)$ as $m\rightarrow\infty.$ So, we have that
$f(x_m)\rightarrow f(z_0)$ as $m\rightarrow\infty$ and
simultaneously $y=f_m(x_m)$ for sufficiently large $m\in {\Bbb N}.$
Thus,
$|y-f(z_0)|=|f_m(x_m)-f(z_0)|\leqslant
|f_m(x_m)-f(x_m)|+|f(x_m)-f(z_0)|\rightarrow 0$ as
$m\rightarrow\infty.$ Thus, $y=f(z_0)\in f(\overline{B(x_0,
\varepsilon_1)})\subset f(A).$ So, $y\in f(A),$ i.e., $B(f(x_0),
r_0/2)\subset f(A),$ as required.

\medskip
We will prove the discreteness of the mapping using methodology
from~\cite{Cr}. Assume the contrary. Then there is $x_0 \in D$ and a
sequence $x_k\in D,$ $k=1,2,\ldots ,$ $x_k \ne x_0,$ such that
$x_k\rightarrow x_0$ as $k\rightarrow \infty$ and $f(x_k)=f(x_0).$
Observe that $E_0=\{x\in D: f(x)=f(x_0)\}$ is closed in $D$ by the
continuity of $f$ and does not coincide with $D,$ because $f\not
\equiv const.$ Thus, we may assume that $x_0$ may be replaced by non
isolated boundary point of $E_0.$ Let $\varepsilon_0>0$ be such that
$\overline{B(x_0, \varepsilon_0)}\subset D.$ By the assumption, $f$
is not identically a constant in $B(x_0, \varepsilon)$ for any
$0<\varepsilon<\varepsilon_0.$

\medskip
We show that for every $0<\varepsilon<\varepsilon_0$ there exist
$0<t<\varepsilon_0$ such that $f(x_0)\not\in f(S(x_0, t)).$
Otherwise, there is $0<\varepsilon<\varepsilon_0$  and $x_t\in
S(x_0, t)$ with $f(x_t)=f(x_0)$ for every $0<t<\varepsilon.$ Since
$f$ is not a constant in $B(x_0, \varepsilon),$ we may find $z_2\in
B(x_0, \varepsilon)$ and $\varepsilon_2>0$ such that
$\overline{B(z_2, \varepsilon_2)}\subset B(x_0, \varepsilon)$ and
$\overline{B(f(x_0), \varepsilon_2)}\cap f(B(z_2,
\varepsilon_2))=\varnothing.$ Since $f_m$ converges to $f$ locally
uniformly, we also have that
\begin{equation}\label{eq1C}
d(f_m(B(z_2, \varepsilon_2)), f(x_0))\geqslant \varepsilon_1>0
\end{equation}
for sufficiently large $m\in {\Bbb N}$ and some $\varepsilon_1>0.$
Let $E =\{x_t\}_{t\in (0, \varepsilon)}\,,$
$\Gamma=\Gamma(\overline{B(z_2, \varepsilon_2)}, E, B(x_0,
\varepsilon))\,.$ Since $\overline{B(z_2, \varepsilon_2)}$ is a
continuum, there are $0<a<b<\varepsilon$ such that $E\cap S(x_0,
t)\ne\varnothing\ne \overline{B(z_2, \varepsilon_2)}\cap S(x_0, t)$
for every $a<t<b.$ By Proposition~\ref{pr1}
\begin{equation}\label{eq1D}
M_p(\Gamma)\geqslant\alpha>0
\end{equation}
for some $\alpha>0.$ On the other hand, let $F:=f^{\,-1}(f(x_0))\cap
\overline{B(x_0, \varepsilon)}.$ Now, $F$ is a compact set in $D.$
We set $r_m=\max\{0, \widetilde{r}_m\},$ where
$\widetilde{r}_m=\sup\limits_{x\in f_m(E)}|x-f(x_0)|.$ By the
triangle inequality,
$|f_m(x)-f_m(x_0)|\leqslant |f_m(x)-f(x)|+|f(x)-f(x_0)|\rightarrow
0$ as $m\rightarrow\infty$ uniformly over $x\in F.$ Thus,
$\widetilde{r}_m=\sup\limits_{x\in f_m(E)}|x-f(x_0)|\leqslant
\sup\limits_{x\in f_m(F)}|x-f(x_0)|=|f_m(z_m)-f(x_0)|\rightarrow 0
$
as $m\rightarrow\infty,$ where $z_m\in F$ is such that
$\sup\limits_{x\in f_m(F)}|x-f(x_0)|=|f_m(z_m)-f(x_0)|.$ Therefore,
$f_m(E)\subset B(f(x_0), r_m),$ where $r_m\rightarrow 0$ as
$m\rightarrow\infty.$ Now, by~(\ref{eq1C}),
$f_m(\Gamma)>\Gamma(S(f(x_0), r_m), S(f(x_0), \varepsilon_1),
A(f(x_0), r_m, \varepsilon_1))$
for sufficiently large $m\in {\Bbb N}.$ The latter implies that
\begin{equation}\label{eq1E}
\Gamma>\Gamma_{f_m}(f(x_0), r_m, \varepsilon_1)\,.
\end{equation}
The relation~(\ref{eq1E}) together with the definition
of $f_{m_k}$ in~(\ref{eq2*A})--(\ref{eqA2}) gives that
\begin{equation}\label{eq3D}
M_p(\Gamma)\leqslant M_p(\Gamma_{f_m}(f(x_0), r_m,
\varepsilon_1))\leqslant \int\limits_{A(f(x_0), r_m, \varepsilon_1)}
\widetilde{Q}_{f_{m}}(y)\cdot\eta^{\,p}(|y-f(x_0)|)\,dm(y)\,,
\end{equation}
where
$$\widetilde{Q}_{f_{m}}(y)=\begin{cases}Q_{f_{m}}(y), & Q_{f_{m}}(y)\geqslant 1\\
 1, & Q_{f_{m}}(y)<1\end{cases}\,,$$
whenever $\eta:(r_m, \varepsilon_1)\rightarrow {\Bbb R}$ is an
arbitrary nonnegative Lebesgue measurable function with
$\int\limits_{r_m}^{\varepsilon_2}\eta(r)\,dr\geqslant 1.$
Now, we set $I_m=I(f(x_0), r_m,
\varepsilon_1)=\int\limits_{r_m}^{\varepsilon_1}\
\frac{dr}{r^{\frac{n-1}{p-1}}\widetilde{q}_{m,
f(x_0)}^{\frac{1}{p-1}}(r)}\,,$
where
$\widetilde{q}_{m, f(x_0)}(r)=\frac{1}{\omega_{n-1}r^{n-1}}
\int\limits_{S(f(x_0),
r)}\widetilde{Q}_{f_{m}}(y)\,d\mathcal{H}^{n-1}\,.$
By Lemma~\ref{lem1} and Remark~\ref{rem1}
\begin{equation}\label{eq2_2}
I_m=\int\limits_{r_m}^{\varepsilon_1}\
\frac{dr}{r^{\frac{n-1}{p-1}}\widetilde{q}_{m,
f(x_0)}^{\frac{1}{p-1}}(r)}\rightarrow\infty
\end{equation}
as $m\rightarrow\infty.$
Since $\widetilde{q}_{m, f(x_0)}(r)\geqslant 1$ for a.e. $r$ and
by~(\ref{eq2_2}), $0<I_m<\infty$ for sufficiently large $m\in {\Bbb
N}.$ Set
$$ \psi_m(t)= \left \{\begin{array}{rr}
1/[t^{\frac{n-1}{p-1}}\widetilde{q}_{m, f(x_0)}^{\frac{1}{p-1}}(t)],
& t\in (r_m, \varepsilon_1)\ ,
\\ 0,  &  t\notin (r_m, \varepsilon_1)\ .
\end{array} \right. $$
Let $\eta_m(t)=\psi_m(t)/I_m$ for $t\in (r_m, \varepsilon_1)$ and
$\eta_m(t)=0$ otherwise. Therefore, $\eta_m$ satisfies~(\ref{eqA2})
for $r_1=r_m$ and $r_2=\varepsilon_1.$ Now, by~(\ref{eq3D}),
(\ref{eq2_2}) and by Fubini's theorem,
\begin{equation}\label{eq3E}
M_p(\Gamma)\leqslant\frac{1}{I^{p}_m}\int\limits_{A(f(x_0), r_m,
\varepsilon_1)}
Q(y)\cdot\psi_m^p(|y-f(x_0)|)\,dm(y)=\frac{\omega_{n-1}}{I^{p-1}_m}\rightarrow
0
\end{equation}
as $m\rightarrow\infty.$ The relation~(\ref{eq3E}) contradicts
with~(\ref{eq1D}). Thus, for every $0<\varepsilon<\varepsilon_0$
there exist $0<t<\varepsilon$ such that $f(x_0)\not\in f(S(x_0,
t)),$ as required.

\medskip
Let $0<\rho<\varepsilon_0,$ let $0<\varepsilon<\rho$ be such that
$y=f(x_0)\not\in f(S(x_0, \varepsilon))$ and let $\beta=d(y,
f(S(x_0, \varepsilon)))>0.$ By Proposition~\ref{pr6},
$n_m:=\mu(y, f_m, B(x_0, \varepsilon))=\mu(y, f, B(x_0,
\varepsilon))=q(y)$
for sufficiently large $m\in {\Bbb N}.$ Now, by
Proposition~\ref{pr7}
\begin{equation}\label{eq7_A}
N(y, f_m, B(x_0, \varepsilon))\leqslant n_m=q(y)<\infty
\end{equation}
for sufficiently large $m=1,2,\ldots .$ Since by the assumption $f$
is not discrete, we may find $x_1,\ldots, x_j\in f^{\,-1}f(x_0)$ in
$B(x_0, \varepsilon)$ with $j>q=q(y).$ We may find $\varepsilon_k>0$
such that $\overline{B(x_k, \varepsilon_k)}\subset B(x_0,
\varepsilon)$ for $k=1,2,\ldots,$ while  $\overline{B(x_k,
\varepsilon_k)}\cap \overline{B(x_p, \varepsilon_p)}=\varnothing$
for $k\ne p,$ $1\leqslant k, p\leqslant j.$

By Theorem~\ref{th1} there is $r_k>0,$ which does not depend on $m,$
such that
$$B_h(f_m(x_k), r_k)\subset f_m(B(x_k, \varepsilon_k))\,,\ldots \,,k=1,2,\ldots j\,,\quad m=1,2,\ldots\,.$$
The latter implies that, for $m\geqslant m_0,$ some $m_0\in {\Bbb
N}$ and $r_{\varepsilon}>0$
\begin{equation}\label{eq1_B}
B_h(f(x_0), r_{\varepsilon})\subset f_m(B(x_k,
\varepsilon_k))\,,\ldots \,,k=1,2,\ldots j\,,\quad m\geqslant m_0\,.
\end{equation}
The relation~(\ref{eq1_B}) implies that there are $a_k\in B(x_k,
\varepsilon_k)$ such that $f_m(a_k)=f(x_0)=y,$ $k=1,2,\ldots, j.$
The latter means that $N(y, f_m, B(x_0, \varepsilon))\geqslant j>q$
that contradicts with~(\ref{eq7_A}). The obtained contradiction
proves the discreteness of $f,$ that finishes the proof.~$\Box$

\medskip
The following statement holds, cf.~\cite{Cr} and \cite{ST}.

\medskip
\begin{theorem}\label{th3}
{\it\, Let $D$ be a domain in ${\Bbb R}^n,$ $n\geqslant 2,$ let
$n-1<p\leqslant n,$ and let $\Phi\colon\overline{{\Bbb
R}^{+}}\rightarrow\overline{{\Bbb R}^{+}}$ be a non-decreasing
function. Besides that, let $f_j:D\rightarrow {\Bbb R}^n,$
$n\geqslant 2,$ $j=1,2,\ldots,$ be a sequence of open discrete
mappings satisfying the conditions~(\ref{eq2*A})--(\ref{eqA2}) with
some function $Q=Q_j:{\Bbb R}^n\rightarrow[0, \infty]$ at any point
$y_0\in \overline{{\Bbb R}^n}$ for which~(\ref{e3.3.1}) holds with
$Q_{f_j}:=Q_j$ and converging to some mapping $f:D\rightarrow
\overline{{\Bbb R}^n}$ as $j\rightarrow\infty$ locally uniformly in
$D$ with respect to the chordal metric $h.$ Assume that the
condition~(\ref{eq2A}) holds. Then either $f$ is a constant in
$\overline{{\Bbb R}^n}$, or $f$ is open discrete mapping
$f:D\rightarrow{\Bbb R}^n.$}
\end{theorem}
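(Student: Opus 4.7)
The plan is to reduce Theorem~\ref{th3} to Theorem~\ref{th2} by showing that, when the limit $f$ is not constant in $\overline{{\Bbb R}^n},$ it cannot attain the value $\infty.$ Once $f(D)\subset {\Bbb R}^n$ is known, $f$ is chordally continuous with finite values and hence locally bounded on $D,$ so the chordal local uniform convergence $f_m\to f$ automatically upgrades to Euclidean local uniform convergence, and Theorem~\ref{th2} delivers the open and discrete conclusion.

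First I would set up the hypotheses of Theorem~\ref{th1}. Non-constancy of $f$ gives $a,b\in D$ with $f(a)\ne f(b);$ setting $\delta:=\tfrac12 h(f(a),f(b))>0,$ the tail $\{f_m\}_{m\ge m_0}$ satisfies $h(f_m(a),f_m(b))\geqslant \delta$ and hence belongs to $\frak{F}^{\Phi,p}_{a,b,\delta}(D),$ with equicontinuity at every point of $D$ inherited from the chordal convergence. Theorem~\ref{th1} is therefore available on every compactum in $D.$

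To show $f^{\,-1}(\infty)=\varnothing,$ argue by contradiction. Suppose $f(x_0)=\infty$ for some $x_0\in D$ and introduce the inversion $\psi(y)=y/|y|^2,$ $g_m:=\psi\circ f_m,$ $g:=\psi\circ f.$ The $g_m$ inherit~(\ref{eq2*A})--(\ref{eqA2}) at every point of $\overline{{\Bbb R}^n}$ with majorant $\widetilde Q(y):=Q_{f_m}(y/|y|^2),$ and the integral~(\ref{e3.3.1}) is $\psi$-invariant by the Jacobian computation used in~(\ref{eq3A}). Since $g(x_0)=0,$ chordal continuity and a sufficiently small choice of $\varepsilon_0$ force $g$ — and, for large $m,$ the $g_m$ — to be Euclidean-bounded on $B(x_0,\varepsilon_0/2).$ Thus $g_m\to g$ locally uniformly in the Euclidean metric on this ball, and Theorem~\ref{th2} applied to $g_m$ there forces $g$ to be constant or open and discrete on $B(x_0,\varepsilon_0/2).$

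The \emph{main obstacle} is then to complete the global argument. Over $C:=f^{\,-1}(\infty),$ the constant case corresponds to $x_0\in U:=\operatorname{Int}(C),$ while the open-discrete case forces $x_0$ to be isolated in $C.$ A boundary analysis excludes $\varnothing \ne U \ne D$: any $x_1\in \partial U\cap D$ lies in $C$ by closedness but, being outside $U,$ must be isolated in $C$ by the dichotomy, which contradicts $x_1$ being accumulated by $U\subset C.$ Hence $U=\varnothing$ or $U=D;$ the latter contradicts non-constancy, so it remains to exclude an isolated pole $x_0.$ I would combine Theorem~\ref{th1} — which, since $f_m(x_0)\to\infty,$ yields that $f_m(B(x_0,\varepsilon))$ eventually covers a chordal ball $B_h(\infty,r_0/2),$ i.e.\ a Euclidean exterior $\{|y|>R_0\}$ — with the fact that $f_m\to f$ uniformly in the Euclidean metric on $S(x_0,\varepsilon)$ (where $f$ is finite), bounding $|f_m|$ by some $R_f$ there for large $m.$ A topological-degree argument along the lines of the discreteness step in the proof of Theorem~\ref{th2}, based on Propositions~\ref{pr6} and~\ref{pr7}, then contrasts the finite local degree of the open-discrete mapping $g$ at $0$ with the uniform preimage-surjectivity of $f_m$ over the ``free'' region $\{|y|>\max(R_0,R_f)\},$ producing the sought contradiction.
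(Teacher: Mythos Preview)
Your overall strategy --- show $f(D)\subset{\Bbb R}^n$ and then invoke Theorem~\ref{th2} --- is exactly the paper's, but your execution is far more elaborate than necessary, and the crucial step~6 is left as a vague sketch.

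The paper disposes of $f^{-1}(\infty)=\varnothing$ in one line. Fix any $x_0\in D$. Theorem~\ref{th1} gives $r_0>0$ with $B_h(f_m(x_0),r_0)\subset f_m(B(x_0,\varepsilon_1))$ for all large $m$; the triangle inequality then yields
\[
B_h(f(x_0),r_0/2)\ \subset\ B_h(f_m(x_0),r_0)\ \subset\ f_m(B(x_0,\varepsilon_1))\ \subset\ {\Bbb R}^n
\]
for large $m$. Since the chordal ball $B_h(f(x_0),r_0/2)$ contains its center $f(x_0)$, this forces $f(x_0)\in{\Bbb R}^n$. That is the entire argument.

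You actually have this ingredient in your step~6: you note that $f_m(B(x_0,\varepsilon))$ eventually covers $B_h(\infty,r_0/2)$. But you then write ``i.e.\ a Euclidean exterior $\{|y|>R_0\}$'', and this identification is precisely what hides the contradiction from you: the chordal ball $B_h(\infty,r_0/2)$ contains $\infty$ itself, which cannot lie in $f_m(B(x_0,\varepsilon))\subset{\Bbb R}^n$. (Equivalently, $f_m(\overline{B(x_0,\varepsilon)})$ is compact in ${\Bbb R}^n$, hence bounded, so it cannot contain an unbounded set $\{|y|>R_0\}$.) Either observation ends the proof immediately, with no need for the inversion $\psi$, the local application of Theorem~\ref{th2} to $g_m$, the interior/boundary dichotomy on $C=f^{-1}(\infty)$, or any degree argument. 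Your steps~3--5 are thus entirely superfluous, and the degree argument you outline in step~6, while perhaps completable, is both unnecessary and not spelled out enough to stand on its own.
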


\medskip
\begin{proof} The proof is based on Theorem~\ref{th1} and completely
similar to the first part of the proof of Theorem~1.2 in
\cite{Sev$_2$}.~$\Box$
\end{proof}

\section{Connection with Orlicz-Sobolev classes}

The definitions of Sobolev and Orlicz-Sobolev classes used below can
be found in~\cite{Sev$_1$} and are therefore omitted. We also do not
provide many of the notations used below.

\medskip
If $\rho:\,{\Bbb R}^n\rightarrow [0,\infty]$ is a Borel function,
the integral of $\rho$ over $S$ is defined as
\begin{equation}\label{eq3_A}
\int\limits_S \rho^k\, d\mathcal A=\int\limits_{{\Bbb R}^n}
\rho^k(y) N(S, y)\,d\mathcal H^k y. \end{equation}

Let $\Gamma$ be a family of $k$-dimensional surfaces $S$. A Borel
function $\rho\colon {\Bbb R}^n\rightarrow\overline{{\Bbb R}^+}$ is
said to be {\it admissible} for $\Gamma$ (briefly: $\rho\in{\rm
adm}\Gamma$) if
\begin{equation}\label{eq8.2.6}\int\limits_S\rho^k\,d{\mathcal{A}}\geqslant
1\end{equation}
for every surface $S\in\Gamma,$ where the integral on the left-hand
side of~(\ref{eq8.2.6}) is defined by relation~(\ref{eq3_A}). We say
that a property $P$ holds for almost every $k$-dimensional surface,
if $P$ holds for all surfaces except a family of zero $p$-modulus.

The following important information concerning the capacity of a
pair of sets relative to a domain can be found in Ziemer's paper
\cite{Zi$_1$}. Let $G$ be a bounded domain in ${\Bbb R}^n$, and let
$C_{0}, C_{1}$ be nonintersecting compact subsets of the closure of
$G$. Put $R=G \setminus (C_{0} \cup C_{1})$ and $R^{\,*}=R \cup
C_{0}\cup C_{1}$, then the {\it $p$-capacity of the pair $C_{0},
C_{1}$ relative to the closure of~$G$} is defined to be the quantity
$C_p[G, C_{0}, C_{1}] = \inf \int\limits_{R} | \nabla u|^{p}\,
dm(x)$, where the infimum is taken over all functions $u$ continuous
on $R^{\,*}$, $u\in ACL(R)$, with $u=1$ on $C_{1}$ and $u=0$ on
$C_{0}$. Such functions will be called admissible for $C_p [G,
C_{0}, C_{1}]$. A set $\sigma \subset {\Bbb R}^n$ is said to {\it
separate $C_{0}$ and $C_{1}$ in $R^{\,*}$} if $\sigma \cap R$ is
closed in $R$ and there exist nonintersecting set $A$ and $B$ open
in $R^{\,*} \setminus \sigma$ and such that $R^{\,*} \setminus
\sigma = A\cup B$, $C_{0}\subset A$, and $C_{1} \subset B$. Let
$\Sigma$ denote the class of all sets separating $C_{0}$ and $C_{1}$
in $R^{\,*}$. Putting $p^{\,\prime} = p/(p-1)$, we introduce the
quantity $\widetilde{M}_{p^{\,\prime}}(\Sigma)=\inf_{\rho\in
\widetilde{\rm adm} \Sigma} \int\limits_{{\Bbb
R}^n}\rho^{\,p^{\,\prime}}\,dm(x),$
where the formula $\rho\in \widetilde{\rm adm}\Sigma$ means that
$\rho$ is a nonnegative Borel function on ${\Bbb R}^n$ such that
\begin{equation} \label{eq13.4.13}
\int\limits_{\sigma \cap R}\rho \,d{\mathcal H}^{n-1} \geqslant
1\quad\forall\, \sigma \in \Sigma.
\end{equation}
Observe that, by Ziemer's result,
\begin{equation}\label{eq3_4}
\widetilde{M}_{p^{\,\prime}}(\Sigma)=C_p[G , C_{0} ,
C_{1}]^{\,-1/(p-1)},
\end{equation}
see \cite[Theorem~3.13]{Zi$_1$} for $p=n$ and \cite[p.~50]{Zi$_2$}
for $1<p<\infty$. We also observe that, by a result of Hesse,
\begin{equation}\label{eq4_4}
M_p(\Gamma(E, F, D))= C_p[D, E, F]
\end{equation}
under the condition $(E\cup F)\cap\partial D=\varnothing$
see~\cite[Theorem~5.5]{Hes}. Shlyk has proved that the condition
$(E\cup F)\cap\partial D=\varnothing$ may be removed,
see~\cite[Theorem~1]{Shl}.

The following class of mappings is a generalization of
quasiconformal mappings in the sense of the ring definition by
Gehring (see, e.g., \cite[Chapter~9]{MRSY}). Let $p\geqslant 1,$ let
$D$ and $D^{\,\prime}$ be domains in $\overline{{\Bbb R}^n},$
$n\geqslant 2,$ $x_0\in{\Bbb R}^n$ and let
$Q:D\rightarrow(0,\infty)$ be a Lebesgue measurable function. We say
that $f:D\rightarrow D^{\,\prime}$ is a {\it lower $Q$-mapping at
$x_0$ with respect to the $p$-modulus,} whenever
\begin{equation}\label{eq1A_1}
M_p(f(\Sigma_{\varepsilon}))\geqslant \inf\limits_{\rho\in{\rm
ext}_p\,{\rm adm}\,\Sigma_{\varepsilon}}\int\limits_{D\cap A(x_0,
\varepsilon, r_0)}\frac{\rho^p(x)}{Q(x)}\,dm(x)
\end{equation}
for each ring $A(x_0, \varepsilon, r_0)=\{x\in {\Bbb R}^n:
\varepsilon<|x-x_0|<r_0\},$ $r_0\in(0, d_0),$ $d_0=\sup\limits_{x\in
D}|x-x_0|,$
where $\Sigma_{\varepsilon}$ denotes the family of all intersections
of spheres $S(x_0, r)$ with domain $D,$ $r\in (\varepsilon, r_0).$
The following statement facilitates the verification of the infinite
series of inequalities in (\ref{eq1A_1}) and may be established
similarly to the proof of \cite[Theorem~9.2]{MRSY}.

\medskip
\begin{lemma}\label{lemma4}{\,
Let $D,$ $D^{\,\prime}\subset\overline{{\Bbb R}^n},$
$x_0\in\overline{D}\setminus\{\infty\}$ and let
$Q:D\rightarrow(0,\infty)$ be a Lebesgue measurable function. Then a
mapping $f:D\rightarrow D^{\,\prime}$ is a lower $Q$-mapping with
respect to the $p$-modulus at $x_0,$ $p>n-1,$ if and only if
%
%\begin{equation}\label{eq15}
$M_p(f(\Sigma_{\varepsilon}))\geqslant\int\limits_{\varepsilon}^{r_0}
\frac{dr}{||\,Q||_{s}(r)}\quad\forall\ \varepsilon\in(0,r_0)\,,\
r_0\in(0,d_0),$ $d_0=\sup\limits_{x\in D}|x-x_0|,$
%\end{equation}
%
$s=\frac{n-1}{p-n+1},$ where, as above, $\Sigma_{\varepsilon}$
denotes the family of all intersections of spheres $S(x_0, r)$ with
the domain $D,$ $r\in (\varepsilon, r_0),$
$ \Vert
Q\Vert_{s}(r)=\left(\int\limits_{D(x_0,r)}Q^{s}(x)\,d{\mathcal{A}}\right)^{\frac{1}{s}}$
-- $L_{s}$-norm of function $Q$ over sphere $D(x_0,r)=\{x\in D:
|x-x_0|=r\}=D\cap S(x_0,r)$.}
\end{lemma}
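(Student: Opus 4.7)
The plan is to evaluate the infimum on the right-hand side of~(\ref{eq1A_1}) explicitly, showing that
$$
\inf_{\rho\in {\rm ext}_p\,{\rm adm}\,\Sigma_{\varepsilon}}
\int\limits_{D\cap A(\varepsilon,r_0,x_0)}\frac{\rho^p(x)}{Q(x)}\,dm(x)
\;=\;\int\limits_{\varepsilon}^{r_0}\frac{dr}{\|Q\|_{s}(r)}\,,
\qquad s=\frac{n-1}{p-n+1}\,.
$$
Then both directions of the ``if and only if'' follow immediately. The argument proceeds in two steps, as in the proof of~\cite[Theorem~9.2]{MRSY}, with the exponent $n$ replaced by $p$.

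First, I would prove the lower bound. Fix an admissible $\rho\in{\rm ext}_p\,{\rm adm}\,\Sigma_{\varepsilon}$, so that $\int_{D(x_0,r)}\rho^{n-1}\,d\mathcal A\geqslant 1$ for almost every $r\in(\varepsilon,r_0)$. Writing $\rho^{n-1}=(\rho^p/Q)^{(n-1)/p}\,Q^{(n-1)/p}$ and applying H\"older's inequality on each sphere $D(x_0,r)$ with the dual exponents $p/(n-1)$ and $p/(p-n+1)$ (which are admissible because $p>n-1$), one obtains
$$
1\leqslant \left(\int\limits_{D(x_0,r)}\frac{\rho^p}{Q}\,d\mathcal A\right)^{\frac{n-1}{p}}
\cdot \left(\int\limits_{D(x_0,r)}Q^{s}\,d\mathcal A\right)^{\frac{p-n+1}{p}}\,,
$$
since the exponent $(n-1)b/p$ with $b=p/(p-n+1)$ equals $s$. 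Rearranging and raising to the power $p/(n-1)$ yields
$$
\int\limits_{D(x_0,r)}\frac{\rho^p(x)}{Q(x)}\,d\mathcal A \;\geqslant\; \frac{1}{\|Q\|_{s}(r)}\,.
$$
Writing $dm(x)=d\mathcal A\,dr$ in spherical coordinates about $x_0$ and integrating from $\varepsilon$ to $r_0$ gives the desired lower bound for $\int_{D\cap A(\varepsilon,r_0,x_0)}\rho^p/Q\,dm$, and hence for the infimum.

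Second, I would establish that this lower bound is sharp by exhibiting an optimal test function. Equality in the H\"older step forces $\rho^p/Q$ to be proportional to $Q^{s}$ on each sphere, which motivates the choice
$$
\rho_0(x)\;=\;\frac{Q^{\,1/(p-n+1)}(x)}{\|Q\|_{s}^{\,s/(n-1)}(|x-x_0|)}
$$
(extended by $0$ outside $D\cap A(\varepsilon,r_0,x_0)$). A direct computation using $1/(p-n+1)=(s+1)/p$ shows $\int_{D(x_0,r)}\rho_0^{n-1}\,d\mathcal A=1$ for a.e.~$r$, so $\rho_0\in{\rm ext}_p\,{\rm adm}\,\Sigma_\varepsilon$; a second computation, using the identity $s(p-n+1)/(n-1)=1$ built into the choice of $s$, gives
$$
\int\limits_{D\cap A(\varepsilon,r_0,x_0)}\frac{\rho_0^{p}(x)}{Q(x)}\,dm(x)
\;=\;\int\limits_{\varepsilon}^{r_0}\|Q\|_{s}^{\,s-ps/(n-1)}(r)\,dr
\;=\;\int\limits_{\varepsilon}^{r_0}\frac{dr}{\|Q\|_{s}(r)}\,,
$$
matching the lower bound.

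The main obstacle is purely technical: one must handle the degenerate cases in which $\|Q\|_{s}(r)$ equals $0$ or $+\infty$ on a set of positive $r$-measure, and verify the measurability of $r\mapsto\|Q\|_{s}(r)$ so that Fubini and the spherical coordinate decomposition $dm=d\mathcal A\,dr$ can be applied. Both issues are treated exactly as in~\cite[Ch.~9]{MRSY}: one truncates $Q$ to the level sets $\{Q\leqslant N\}\cap\{\|Q\|_s(r)>1/N\}$, applies the argument above, and passes to the limit using the monotone convergence theorem; in the cases where $\int_\varepsilon^{r_0}dr/\|Q\|_s(r)=0$ the lemma is trivial, and where it is infinite it is enforced by the lower bound. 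Once these points are handled, combining the two steps with the definition~(\ref{eq1A_1}) of a lower $Q$-mapping yields the stated equivalence.~$\Box$
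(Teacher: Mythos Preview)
Your proposal is correct and follows precisely the route the paper itself indicates: the paper does not give an independent proof but states that the lemma ``may be established similarly to the proof of \cite[Theorem~9.2]{MRSY}'', and your argument is exactly that adaptation, replacing the conformal exponent $n$ by $p$ via the H\"older inequality on spheres with dual exponents $p/(n-1)$ and $p/(p-n+1)$, then exhibiting the extremal test function. The algebraic identities you use (in particular $s-ps/(n-1)=-1$) check out, and your treatment of the degenerate and measurability issues by reference to~\cite[Ch.~9]{MRSY} is in line with the paper's own level of detail.
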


\medskip
We define for any $x\in D$ and fixed $p,q$, $p,q\geqslant 1$
\begin{equation}\label{eq15.1}
K_{I, q}(x,f)\quad =\quad\left\{
\begin{array}{rr}
\frac{|J(x,f)|}{{l\left(f^{\,\prime}(x)\right)}^q}, & J(x,f)\ne 0,\\
1,  &  f^{\,\prime}(x)=0, \\
\infty, & {\rm otherwise}
\end{array}
\right.\,.\end{equation}

The following statement was first proved for homeomorphisms and
$x_0\in\overline{D}$ in \cite[Theorem~2.1]{KR},
cf.~\cite[Lemma~2.3]{PSS}. The proof of this statement for the case
$x_0\in {\Bbb R}^n$ does not differ from the above case
$x_0\in\overline{D}.$

\medskip
\begin{lemma}{}\label{thOS4.1} {\it\,
Let $D$ be a domain in ${\Bbb R}^n,$ $n\geqslant 3,$
$\varphi:(0,\infty)\rightarrow (0,\infty)$ be a non-decreasing
function satisfying the condition
$
\int\limits_{1}^{\infty}\left(\frac{t}{\varphi(t)}\right)^
{\frac{1}{n-2}}dt<\infty\,.
$
If $n\geqslant 3$ and $p>n-1,$ then every open discrete mapping
$f:D\rightarrow {\Bbb R}^n$ with finite distortion of the class
$W^{1,\varphi}_{loc}$ such that $N(f, D)<\infty$ is a lower
$Q$-mapping with respect to $p$-modulus at each point $x_0\in{\Bbb
R}^n$ for
$Q(x)=N(f, D)\cdot K^{\frac{p-n+1}{n-1}}_{I, \alpha}(x, f),$
$\alpha:=\frac{p}{p-n+1},$ where the inner dilatation
$K_{I,\alpha}(x, f)$ of $f$ at $x$ is of order $\alpha$ is defined
by the relation (\ref{eq15.1}), and the multiplicity $N(f, D)$ is
defined by the second relation in (\ref{eq1G}).}
\end{lemma}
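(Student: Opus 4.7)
\textit{Proof proposal for Lemma~\ref{thOS4.1}.}
By Lemma~\ref{lemma4}, it suffices to verify that for every $x_0\in{\Bbb R}^n,$ every $0<\varepsilon<r_0<d_0,$ and every $\rho\in{\rm adm}\,f(\Sigma_\varepsilon),$
$$
\int\limits_{{\Bbb R}^n}\rho^{\,p}(y)\,dm(y)\ \geqslant\ \int\limits_\varepsilon^{r_0}\frac{dr}{\|Q\|_s(r)}\,,\qquad s=\frac{n-1}{p-n+1}\,,
$$
where $\Sigma_\varepsilon$ is the family of spheres $D(x_0,r)=S(x_0,r)\cap D,$ $r\in(\varepsilon,r_0).$ The plan is to pull back $\rho$ through $f|_{D(x_0,r)}$ for almost every $r,$ apply H\"{o}lder's inequality so that the resulting factor on the ``data'' side reconstructs $Q^s,$ and then integrate in $r$ with the help of the coarea formula together with the multiplicity bound $N(f,D)<\infty.$

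First, I would use the hypothesis on~$\varphi$ to guarantee that $f$ is differentiable a.e., that $f\in W^{1,n-1}_{\rm loc}$ with the Lusin $N$-property, and, crucially, that for a.e.~$r\in(\varepsilon,r_0)$ the restriction $f|_{D(x_0,r)}$ is absolutely continuous on spheres and carries out a change of variables. This is precisely the point where the Kauhanen--Koskela--Mal\'y-type integrability condition on~$\varphi$ enters and is the main obstacle: it reduces us to the framework of~\cite{KR}, whose proof one then follows verbatim. Granting this, on almost every sphere $D(x_0,r),$ admissibility of $\rho$ together with the tangential Jacobian estimate $|J_{n-1}f(x)|\leqslant |J(x,f)|/l(f^{\,\prime}(x))$ yields
$$
1\ \leqslant\ \int\limits_{f(D(x_0,r))}\rho^{\,n-1}\,d\mathcal A\ \leqslant\ \int\limits_{D(x_0,r)}\rho^{\,n-1}(f(x))\,\frac{|J(x,f)|}{l(f^{\,\prime}(x))}\,d\mathcal H^{n-1}(x)\,.
$$

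Next I would split the integrand as a product and apply H\"{o}lder with conjugate exponents $p/(n-1)$ and $p/(p-n+1),$ choosing the factors so that the first becomes $\rho^{\,p}(f(x))|J(x,f)|$ and the second becomes $K_{I,\alpha}(x,f)=|J(x,f)|/l(f^{\,\prime}(x))^{\alpha},$ where $\alpha=p/(p-n+1).$ Since $K_{I,\alpha}(x,f)=(Q(x)/N(f,D))^{s}$ by the definition of~$Q$ and $s(p-n+1)/p=(n-1)/p,$ raising the resulting estimate to the power $p/(n-1)$ produces, for a.e.~$r\in(\varepsilon,r_0),$
$$
\frac{N(f,D)}{\|Q\|_s(r)}\ \leqslant\ \int\limits_{D(x_0,r)}\rho^{\,p}(f(x))\,|J(x,f)|\,d\mathcal H^{n-1}(x)\,.
$$

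Finally, integrating this inequality in $r$ over $(\varepsilon,r_0)$ and invoking the coarea formula converts the right-hand side into $\int_{D\cap A(x_0,\varepsilon,r_0)}\rho^{\,p}(f(x))|J(x,f)|\,dm(x);$ the change-of-variables formula (valid because $f$ has finite distortion, satisfies Lusin's $N$-condition, and $N(f,D)<\infty$) then bounds this integral from above by $N(f,D)\int_{{\Bbb R}^n}\rho^{\,p}(y)\,dm(y).$ Canceling the common factor $N(f,D)$ and taking the infimum over $\rho\in{\rm adm}\,f(\Sigma_\varepsilon)$ delivers the desired inequality, and Lemma~\ref{lemma4} completes the proof. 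The one genuinely non-routine point is the sphere-by-sphere ACL property and change of variables for mappings in $W^{1,\varphi}_{\rm loc}$ under the given growth condition on~$\varphi;$ everything else is linear algebra of the differential and standard integration machinery.
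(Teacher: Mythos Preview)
Your argument is correct and is precisely the standard route: pull back the admissible $\rho$ onto the spheres $D(x_0,r)$ via the area formula and the tangential Jacobian bound $|J_{n-1}f|\leqslant |J(x,f)|/l(f^{\,\prime}(x))$, factor the integrand so that H\"older with exponents $p/(n-1)$ and $p/(p-n+1)$ produces $K_{I,\alpha}$ on the data side, then integrate in $r$ using coarea and the change-of-variables formula with multiplicity $N(f,D)$.

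The paper itself does not supply an independent proof of this lemma; it only records that the statement was established (for homeomorphisms and $x_0\in\overline{D}$) in \cite[Theorem~2.1]{KR}, cf.\ \cite[Lemma~2.3]{PSS}, and remarks that the argument carries over unchanged to arbitrary $x_0\in{\Bbb R}^n$. Your write-up is exactly the argument behind those references, including your explicit acknowledgment that the Calder\'on-type condition on $\varphi$ is what secures a.e.\ differentiability, the $(N)$-property, and absolute continuity on almost every sphere---the one nontrivial analytic input. So your approach and the paper's (cited) approach coincide.
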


\medskip
In what follows we will need the following auxiliary assertion (see,
e.g., \cite[Lemma~7.4, Ch.~7]{MRSY} for $p=n$ and
\cite[Lemma~2.2]{Sal} for $p\ne n.$

\medskip
\begin{proposition}\label{pr1A}
{\,\it Let $x_0 \in {\Bbb R}^n,$ $Q(x)$ be a Lebesgue measurable
function, $Q:{\Bbb R}^n\rightarrow [0, \infty],$ $Q\in
L_{loc}^1({\Bbb R}^n).$ We set $A:=A(x_0, r_1, r_2)=\{ x\,\in\,{\Bbb
R}^n : r_1<|x-x_0|<r_2\}$ and
$\eta_0(r)=\frac{1}{Ir^{\frac{n-1}{p-1}}q_{x_0}^{\frac{1}{p-1}}(r)},$
where $I:=I=I(x_0,r_1,r_2)=\int\limits_{r_1}^{r_2}\
\frac{dr}{r^{\frac{n-1}{p-1}}q_{x_0}^{\frac{1}{p-1}}(r)}$ and
$q_{x_0}(r):=\frac{1}{\omega_{n-1}r^{n-1}}\int\limits_{|x-x_0|=r}Q(x)\,d{\mathcal
H}^{n-1}$ is the integral average of the function $Q$ over the
sphere $S(x_0, r).$ Then
\begin{equation*}\label{eq10A_1}
\frac{\omega_{n-1}}{I^{p-1}}=\int\limits_{A} Q(x)\cdot
\eta_0^p(|x-x_0|)\ dm(x)\leqslant\int\limits_{A} Q(x)\cdot
\eta^p(|x-x_0|)\ dm(x)
\end{equation*}
for any Lebesgue measurable function $\eta :(r_1,r_2)\rightarrow
[0,\infty]$ such that
$\int\limits_{r_1}^{r_2}\eta(r)dr=1. $ }
\end{proposition}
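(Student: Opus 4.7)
The statement is the classical Gehring-type extremal identity for weighted radial integrands on a spherical ring: the specific $\eta_0$ is the minimizer of $\int_A Q(x)\eta^{p}(|x-x_0|)\,dm(x)$ among all $\eta$ with $\int_{r_1}^{r_2}\eta\,dr=1$. The plan has two halves: prove the equality on the left by direct substitution, and derive the inequality on the right from a single application of H\"older.

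First I would reduce the volume integral to an iterated radial integral. By Fubini's theorem (polar coordinates), together with the definition of $q_{x_0}(r)$ in~(\ref{eq10}),
\begin{equation*}
\int\limits_{A}Q(x)\,\eta^{p}(|x-x_0|)\,dm(x)=\omega_{n-1}\int\limits_{r_1}^{r_2}\eta^{p}(r)\,r^{n-1}q_{x_0}(r)\,dr
\end{equation*}
for every nonnegative measurable $\eta$. This reduces the problem to a one-dimensional variational inequality on $(r_1,r_2)$.

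Next, substituting $\eta=\eta_0(r)=\bigl[I\,r^{(n-1)/(p-1)}q_{x_0}^{1/(p-1)}(r)\bigr]^{-1}$ into the right-hand side and collecting exponents via $\frac{(n-1)p}{p-1}-(n-1)=\frac{n-1}{p-1}$ and $\frac{p}{p-1}-1=\frac{1}{p-1}$ collapses the integrand to $I^{-p}r^{-(n-1)/(p-1)}q_{x_0}^{-1/(p-1)}(r)$; integrating over $(r_1,r_2)$ and using the definition of $I$ produces exactly $\omega_{n-1}/I^{p-1}$, which is the claimed equality. The same book-keeping with $p=1$ also gives $\int_{r_1}^{r_2}\eta_0(r)\,dr=1$, confirming that $\eta_0$ is itself admissible.

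Finally, for the inequality I would apply H\"older with conjugate exponents $p$ and $p/(p-1)$ to the normalization constraint, splitting the integrand as
\begin{equation*}
\eta(r)=\bigl(\eta^{p}(r)\,r^{n-1}q_{x_0}(r)\bigr)^{1/p}\bigl(r^{n-1}q_{x_0}(r)\bigr)^{-1/p}\,,
\end{equation*}
which gives
\begin{equation*}
1=\int\limits_{r_1}^{r_2}\eta(r)\,dr\leqslant\biggl(\int\limits_{r_1}^{r_2}\eta^{p}(r)\,r^{n-1}q_{x_0}(r)\,dr\biggr)^{1/p}\cdot I^{(p-1)/p}\,.
\end{equation*}
Rearranging and multiplying by $\omega_{n-1}$, combined with the Fubini identity of the first step, yields the required lower bound $\int_{A}Q\,\eta^{p}\,dm\geqslant\omega_{n-1}/I^{p-1}$. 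There is no real obstacle; the only mild care concerns the conventions $1/\infty=0$ and $1/0=\infty$ in the definition of $\eta_0$, which are harmless since the target inequality becomes vacuous whenever the right-hand side is infinite, while the local integrability assumption $Q\in L^{1}_{\mathrm{loc}}$ guarantees that $q_{x_0}(r)$ is a.e.\ finite and measurable.
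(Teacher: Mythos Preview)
Your argument is correct and is exactly the standard proof of this Gehring--type extremal lemma: Fubini in spherical coordinates reduces to a one-dimensional weighted integral, direct substitution of $\eta_0$ verifies the equality (and its admissibility), and a single H\"older inequality with exponents $p$ and $p/(p-1)$ gives the minimality. Note that the paper itself does not prove this proposition; it only quotes it from \cite[Lemma~7.4, Ch.~7]{MRSY} for $p=n$ and \cite[Lemma~2.2]{Sal} for $p\ne n$, and your write-up reproduces the argument found in those references.
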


The following statement was proved for some cases earlier, see e.g.
\cite[Proposition~3]{KR}.

\medskip
\begin{theorem}\label{th4_2}
{\,\it Let $x_0\in {\Bbb R}^n,$ let $f:D\rightarrow {\Bbb R}^n$ be a
bounded lower $Q$-homeomorphism with respect to $p$-modulus in a
domain $D\subset{\Bbb R}^n,$ $Q\in L_{loc}^{\frac{n-1}{p-n+1}}({\Bbb
R}^n),$ $n-1<p\leqslant n,$ and $\alpha:=\frac{p}{p-n+1}.$ Then, for
every $0<\varepsilon<\varepsilon_0<\infty$ and any compact sets
$C_2\subset D\setminus B(x_0, \varepsilon_0)$ and $C_1\subset
\overline{B(x_0, \varepsilon)}\cap D$ the inequality
$
M_{\alpha}(f(\Gamma(C_1, C_2, D)))\leqslant \int\limits_{A(x_0,
\varepsilon, \varepsilon_1)}Q^{\frac{n-1}{p-n+1}}(x)
\eta^{\alpha}(|x-x_0|)dm(x)
$
holds, where $A(x_0, \varepsilon, \varepsilon_1)=\{x\in {\Bbb R}^n:
\varepsilon<|x-x_0|<\varepsilon_1\}$ and $\eta: (\varepsilon,
\varepsilon_1)\rightarrow [0,\infty]$ is an arbitrary Lebesgue
measurable function such that
$
\int\limits_{\varepsilon}^{\varepsilon_1}\eta(r)dr=1\,.
$
}
\end{theorem}

\begin{proof} Note that,
the set $\sigma_r:= f(S(x_0, r)\cap D)$ is closed in $f(D).$ In
addition, note that $\sigma_r$ separates $f(C_1)$ from $f(C_2)$ for
$r\in (\varepsilon, \varepsilon_0)$ in $f(D),$ since
$$f(C_1)\subset f(B(x_0, r)\cap D):=A,\quad
f(C_2)\subset f(D)\setminus \overline{f(B(x_0, r)\cap D)}:=B\,,$$
$A$ and $B$ are open in $f(D)$ and $f(D)=A\cup \sigma_r\cup B.$ Let
$\Sigma_{\varepsilon}$ be the family of all sets separating $f(C_1)$
from $f(C_2)$ in $f(D).$ Let $\rho^{n-1}\in \widetilde{{\rm
adm}}\bigcup\limits_{r\in (\varepsilon, \varepsilon_0)} f(S(x_0,
r)\cap D)$ in the sense of the relation (\ref{eq13.4.13}). Then also
$\rho\in {\rm adm}\bigcup\limits_{r\in (\varepsilon, \varepsilon_0)}
f(S(x_0, r)\cap D)$ in the sense of the relation (\ref{eq8.2.6}) for
$k=n-1.$ Therefore, since
$\widetilde{M}_{q}(\Sigma_{\varepsilon})\geqslant
M_{q(n-1)}(\Sigma_{\varepsilon})$ for arbitrary $q\geqslant 1,$ we
have that
\begin{equation}\label{eq5A_1}
\widetilde{M}_{p/(n-1)}(\Sigma_{\varepsilon})\geqslant
M_p\left(\bigcup\limits_{r\in (\varepsilon, \varepsilon_1)} f(S(x_0,
r)\cap D)\right)\,.
\end{equation}
However, due to (\ref{eq3_4}) and (\ref{eq4_4}),
$
\widetilde{M}_{p/(n-1)}(\Sigma_{\varepsilon})=\frac{1}{(M_{\alpha}(\Gamma(f(C_1),
f(C_2), f(D))))^{1/(\alpha-1)}}\,.
$
By Lemma~\ref{lemma4}
$$M_{p}\left(\bigcup\limits_{r\in (\varepsilon, \varepsilon_1)} f(S(x_0,
r)\cap D)\right)\geqslant
$$
\begin{equation}\label{eq8B_1}
\geqslant \int\limits_{\varepsilon}^{\varepsilon_1}
\frac{dr}{\Vert\,Q\Vert_{s}(r)}=
\int\limits_{\varepsilon}^{\varepsilon_1}
\frac{dt}{\omega^{\frac{p-n+1}{n-1}}_{n-1}
t^{\frac{n-1}{\alpha-1}}\widetilde{q}_{x_0}^{\,\frac{1}{\alpha-1}}(t)}\quad\forall\,\,
i\in {\Bbb N}\,, s=\frac{n-1}{p-n+1}\,,\end{equation} where
$\Vert
Q\Vert_{s}(r)=\left(\int\limits_{D(x_0,r)}Q^{s}(x)\,d{\mathcal{A}}\right)^{\frac{1}{s}}$
is $L_{s}$-norm of the function $Q$ over the sphere $S(x_0,r)\cap D$
and
$\widetilde{q}_{x_0}(r):=\frac{1}{\omega_{n-1}r^{n-1}}\int\limits_{|x-x_0|=r}Q^s(x)\,d{\mathcal
H}^{n-1}.$ Then from (\ref{eq5A_1})--(\ref{eq8B_1}) it follows that
\begin{equation}\label{eq9C}
M_{\alpha}(\Gamma(f(C_1), f(C_2), f(D)))\leqslant
\frac{\omega_{n-1}}{I^{\alpha-1}}\,,
\end{equation}
where $I=\int\limits_{\varepsilon}^{\varepsilon_1}\
\frac{dr}{r^{\frac{n-1}{\alpha-1}}\widetilde{q}_{x_0}^{\frac{1}{\alpha-1}}(r)}.$
Note that $f(\Gamma(C_1,C_2, D))\subset \Gamma(f(C_1), f(C_2),
f(D)),$ so that from (\ref{eq9C}) it follows that
$
M_{\alpha}(f(\Gamma(C_1,C_2, D)))\leqslant
\frac{\omega_{n-1}}{I^{\alpha-1}}\,.
$
The proof is completed by applying Proposition~\ref{pr1A}.~$\Box$
\end{proof}

\medskip
By Lemma~\ref{thOS4.1} and Theorem~\ref{th4_2} we obtain the
following.

\medskip
\begin{theorem}\label{th5} {\it\,
Let $D$ be a domain in ${\Bbb R}^n,$ $n\geqslant 3,$
$\varphi:(0,\infty)\rightarrow (0,\infty)$ be a non-decreasing
function satisfying condition
\begin{equation}\label{eqOS3.0a}
\int\limits_{1}^{\infty}\left(\frac{t}{\varphi(t)}\right)^
{\frac{1}{n-2}}dt<\infty\,.
\end{equation}
If $0<\varepsilon<\varepsilon_0<\infty,$ $n\geqslant 3$ and
$n-1<\alpha\leqslant n,$ then every bounded homeomorphism
$f:D\rightarrow {\Bbb R}^n$ with finite distortion in
$W^{1,\varphi}_{loc}$ with $K_{I, \alpha}(x, f)\in L^1_{\rm loc}(D)$
satisfies the relation
$M_{\alpha}(f(\Gamma(C_1, C_2, D)))\leqslant \int\limits_{A(x_0,
\varepsilon, \varepsilon_1)}Q(x)\eta^{\alpha}(|x-x_0|)\,dm(x)$
for $Q(x)=K_{I, \alpha}(x, f),$ $A(x_0, \varepsilon,
\varepsilon_1)=\{x\in {\Bbb R}^n:
\varepsilon<|x-x_0|<\varepsilon_1\},$ for any compact sets
$C_2\subset D\setminus B(x_0, \varepsilon_0)$ and $C_1\subset
\overline{B(x_0, \varepsilon)}\cap D$ and for an arbitrary Lebesgue
measurable function $\eta:(\varepsilon, \varepsilon_1)\rightarrow
[0,\infty]$ such that
$\int\limits_{\varepsilon}^{\varepsilon_1}\eta(r)dr=~1\,.$ }
\end{theorem}

\medskip
Let now $f:D\rightarrow D^{\,\prime},$ $x_0\in D$ and $y_0=f(x_0).$
Assume that, $f$ is a homeomorphism and denote by $g:=f^{\,-1}.$
Observe that $f(\Gamma(C_1, C_2, D))=f(\Gamma(S(x_0, \varepsilon)),
S(x_0, \varepsilon_0), D)$
and, consequently,
$\Gamma(C_1, C_2, D)=\Gamma_g(x_0, \varepsilon, \varepsilon_0).$

\medskip
Given $p\geqslant 1,$ a non-decreasing function
$\Phi\colon\overline{{\Bbb R}^{+}}\rightarrow\overline{{\Bbb
R}^{+}},$ $a, b\in D,$ $a\ne b,$ $\delta>0$ we denote by
$\frak{O}^{\Phi, p}_{a, b, \delta}(D)$ the family of all
homeomorphisms $f:D\rightarrow {\Bbb R}^n,$ $n\geqslant 2,$ such
that its inverses $g:f(D)\rightarrow D$ are of the class
$W^{1,\varphi}_{loc}$ and have finite distortion such that
$$
\int\limits_{{\Bbb R}^n}\Phi(K_{I, p}(y,
g))\cdot\frac{dm(y)}{(1+|y|^2)^n}<\infty
$$
such that $h(f(a), f(b))\geqslant \delta.$ The following statement
holds.

\medskip
\begin{theorem}\label{th4}
{\it Let $n\geqslant 2,$ $p\in (n-1, n],$ and let $D$ be a bounded
domain in ${\Bbb R}^n.$ Assume that, the corresponding family
$\frak{O}^{\Phi, p}_{a, b, \delta}(D)$ is equicontinuous at $a$ and
$b,$ in addition, $\Phi:\overline{{\Bbb R^{+}}}\rightarrow
\overline{{\Bbb R^{+}}}$ is an increasing convex function that
satisfies the condition
$
\int\limits_{\delta}^{\infty}\frac{d\tau}{\tau(\Phi^{\,-1}(\tau))^{\frac{1}{p-1}}}=\infty
$
for some~$\delta>\Phi(0).$ Assume that the relation~(\ref{eqOS3.0a})
holds.

\medskip
Then for every compactum $K$ in $D$ and for every
$0<\varepsilon<{\rm dist}\,(K,
\partial D)$ there exists
$r_0=r_0(\varepsilon, K)>0$ which does not depend on $f$ such that
$f(B(x_0, \varepsilon))\supset B_h(f(x_0), r_0)$
for all $f\in\frak{O}^{\Phi, p, \varphi}_{a, b, \delta}(D)$ and all
$x_0\in K,$ where $B_h(f(x_0), r_0)=\{w\in \overline{{\Bbb R}^n}:
h(w, f(x_0))<r_0\}.$}
\end{theorem}

\medskip
\begin{proof} By Theorem~\ref{th5} and comments made before the
formulation of Theorem~\ref{th4}, $\frak{O}^{\Phi, p, \varphi}_{a,
b, \delta}(D)\subset \frak{F}^{\Phi, p}_{a, b, \delta}(D).$ The rest
follows by Theorem~\ref{th1}.~$\Box$
\end{proof}

\medskip
{\bf Statements \& Declarations}

\medskip
{\bf Funding.} The work was supported by the National Research
Foundation of Ukraine (Project ``Analogues of Carath\'{e}odory and
Koebe-Bloch theorems for Orlycz-Sobolev classes'', Project number
2025.02/0010).

\medskip
{\bf Competing Interests.} The authors have no relevant financial or
non-financial interests to disclose

\medskip
{\bf Availability of data and material.} The datasets generated
and/or analysed during the current study are available from the
corresponding author on reasonable request.

\medskip
{\bf Authors' contributions.} All authors contributed to the study
conception and design. All authors read and approved the final
manuscript.

%=================Список литературы====================
%\end{fulltext}

CONTACT INFORMATION

\medskip
{\bf \noindent Evgeny Sevost'yanov} \\
{\bf 1.} Zhytomyr Ivan Franko State University,  \\
40 Bol'shaya Berdichevskaya Str., 10 008  Zhytomyr, UKRAINE \\
{\bf 2.} Institute of Applied Mathematics and Mechanics\\
of NAS of Ukraine, \\
19 Henerala Batyuka Str., 84 116 Slov'yans'k,  UKRAINE\\
esevostyanov2009@gmail.com

\medskip
{\bf \noindent Valery Targonskii} \\
Zhytomyr Ivan Franko State University,  \\
40 Bol'shaya Berdichevskaya Str., 10 008  Zhytomyr, UKRAINE \\
w.targonsk@gmail.com

\medskip
{\bf \noindent Nataliya Ilkevych} \\
Zhytomyr Ivan Franko State University,  \\
40 Bol'shaya Berdichevskaya Str., 10 008  Zhytomyr, UKRAINE \\
Email: ilkevych1980@gmail.com

\end{document}